\documentclass[12pt]{article}
\usepackage{amsmath}\usepackage{epsf,amsfonts,amsthm}\usepackage{amscd,mathptmx,amssymb}
\usepackage{xcolor,epic,eepic}\usepackage{epsfig}
\usepackage{fontenc,indentfirst, delarray,amsfonts,amsmath,amssymb}
\usepackage{rotating}
\usepackage[T1]{fontenc}
\usepackage[matrix,arrow,curve]{xy}
\usepackage{amsmath}
\usepackage{amssymb}
\usepackage{amsthm}
\usepackage{amscd}
\usepackage{amsfonts}
\usepackage{graphicx}%
\usepackage{fancyhdr}
\usepackage{dsfont,texdraw}
\usepackage{amsmath}\usepackage{epsf,amsfonts,amsthm}\usepackage{amscd,mathptmx,amssymb}
\usepackage{xcolor,epic,eepic}\usepackage{epsfig}
\usepackage{fontenc,indentfirst, delarray,amsfonts,amsmath,amssymb}
\usepackage{rotating}
\usepackage[T1]{fontenc}
\setcounter{page}{1}

\tolerance=500 \textwidth15.6cm \textheight23cm \hoffset-1.6cm

\pagestyle{myheadings}\markright{Categorification and homotopification}

\newcommand{\bee}{\begin{enumerate}}
\newcommand{\eee}{\end{enumerate}}
\newcommand{\benn}{\begin{equation*}}
\newcommand{\eenn}{\end{equation*}}
\newcommand{\be}{\begin{equation}}
\newcommand{\ee}{\end{equation}}
\newcommand{\bean}{\begin{eqnarray}}
\newcommand{\eean}{\end{eqnarray}}
\newcommand{\bea}{\begin{eqnarray*}}
\newcommand{\eea}{\end{eqnarray*}}

\newcommand{\E}{\ell}
\newcommand{\N}{\mathbb{N}}

\newcommand{\K}{\mathbb{K}}

\newcommand{\op}[1]{\!\!\mathop{\rm ~#1}\nolimits}

\mathchardef\za="710B  
\mathchardef\zb="710C  
\mathchardef\zg="710D  
\mathchardef\zd="710E  
\mathchardef\zve="710F 
\mathchardef\zz="7110  
\mathchardef\zh="7111  
\mathchardef\zy="7112 

\mathchardef\zi="7113  
\mathchardef\zk="7114  
\mathchardef\zl="7115  
\mathchardef\zm="7116  
\mathchardef\zn="7117  
\mathchardef\zx="7118  
\mathchardef\zp="7119  
\mathchardef\zr="711A  
\mathchardef\zs="711B  
\mathchardef\zt="711C  
\mathchardef\zu="711D  
\mathchardef\zf="711E 
\mathchardef\zq="711F  
\mathchardef\zc="7120  
\mathchardef\zw="7121  
\mathchardef\ze="7122  
\mathchardef\zvy="7123  
\mathchardef\zvw="7124  
\mathchardef\zvr="7125 
\mathchardef\zvs="7126 
\mathchardef\zvf="7127  
\mathchardef\zG="7000  
\mathchardef\zD="7001  
\mathchardef\zY="7002  
\mathchardef\zL="7003  
\mathchardef\zX="7004  
\mathchardef\zP="7005  
\mathchardef\zS="7006  
\mathchardef\zU="7007  
\mathchardef\zF="7008  
\mathchardef\zW="700A  

\newcommand{\cyclic}{\mathop{\kern0.9ex{{+}
\kern-2.15ex\raise-.25ex\hbox{\Large\hbox{$\circlearrowright$}}}}\limits}

\newtheorem{rem}{Remark}
\newtheorem{theo}{Theorem}
\newtheorem{prop}{Proposition}
\newtheorem{lem}{Lemma}

\newtheorem{ex}{Example}
\newtheorem{defi}{Definition}

\begin{document}
\title{Higher categorified algebras versus bounded homotopy algebras}
\author{David Khudaverdyan, Ashis Mandal, and Norbert Poncin\footnote{University of
Luxembourg, Campus Kirchberg, Mathematics Research Unit, 6, rue
Richard Coudenhove-Kalergi, L-1359 Luxembourg City, Grand-Duchy of
Luxembourg. The research of D. Khudaverdyan and N. Poncin  was
supported by UL-grant SGQnp2008. A. Mandal thanks the
Luxembourgian NRF for support via AFR grant PDR-09-062.}}
\date{}
\maketitle

\begin{abstract} We define Lie 3-algebras and prove that these are
in 1-to-1 correspondence with the 3-term Lie infinity algebras
whose bilinear and trilinear maps vanish in degree $(1,1)$ and in
total degree 1, respectively. Further, we give an answer to a
question of \cite{Roy07} pertaining to the use of the nerve and
normalization functors in the study of the relationship between
categorified algebras and truncated sh algebras.
\end{abstract}
\maketitle \noindent{\bf{Mathematics Subject Classification (2000)
:}} 18D05, 55U15, 17B70, 18D10, 18G30. \\
{\bf{Key words}} : Higher category, homotopy algebra, monoidal category, Eilenberg-Zilber map.

\section{Introduction}

Higher structures -- infinity algebras and other objects up to
homotopy, higher categories, ``oidified'' concepts, higher Lie
theory, higher gauge theory... -- are currently intensively
investigated. In particular, higher generalizations of Lie
algebras have been conceived under various names, e.g. Lie
infinity algebras, Lie $n$-algebras, quasi-free differential
graded commutative associative algebras ({\small qfDGCAs} for
short), $n$-ary Lie algebras, see e.g. \cite{Dzh05}, crossed
modules \cite{MP09} ...See also \cite{AP10}, \cite{GKP11}.\medskip

More precisely, there are essentially two ways to increase the
flexibility of an algebraic structure: homotopification and
categorification.\medskip

Homotopy, sh or infinity algebras \cite{Sta63} are homotopy
invariant extensions of differential graded algebras. This
property explains their origin in BRST of closed string field
theory. One of the prominent applications of Lie infinity algebras
\cite{LS93} is their appearance in Deformation Quantization of
Poisson manifolds. The deformation map can be extended from
differential graded Lie algebras ({\small DGLA}s) to
$L_{\infty}$-algebras and more precisely to a functor from the
category {\tt L}$_{\infty}$ to the category {\tt Set}. This
functor transforms a weak equivalence into a bijection. When
applied to the {\small DGLA}s of polyvector fields and
polydifferential operators, the latter result, combined with the
formality theorem, provides the 1-to-1 correspondence between
Poisson tensors and star products.\medskip

On the other hand, categorification \cite{CF94}, \cite{Cra95} is
characterized by the replacement of sets (resp. maps, equations)
by categories (resp. functors, natural isomorphisms). Rather than
considering two maps as equal, one details a way of identifying
them. Categorification is a sharpened viewpoint that leads to
astonishing results in TFT, bosonic string theory... Categorified
Lie algebras, i.e. Lie 2-algebras (alternatively, semistrict Lie
2-algebras) in the category theoretical sense, have been
introduced by J. Baez and A. Crans \cite{BC04}. Their
generalization, weak Lie 2-algebras (alternatively, Lie
2-algebras), has been studied by D. Roytenberg
\cite{Roy07}.\medskip

It has been shown in \cite{BC04} that categorification and
homotopification are tightly connected. To be exact, Lie
2-algebras and 2-term Lie infinity algebras form equivalent
2-categories. Due to this result, Lie $n$-algebras are often
defined as sh Lie algebras concentrated in the first $n$ degrees
\cite{Hen08}. However, this `definition' is merely a
terminological convention, see e.g. Definition 4 in
\cite{SS07Structure}. On the other hand, Lie infinity algebra
structures on an $\N$-graded vector space $V$ are in 1-to-1
correspondence with square 0 degree -1 (with respect to the
grading induced by $V$) coderivations of the free reduced graded
commutative associative coalgebra $S^c(s V)$, where $s$ denotes
the suspension operator, see e.g. \cite{SS07Structure} or
\cite{GK94}. In finite dimension, the latter result admits a
variant based on {\small qfDGCAs} instead of coalgebras. Higher
morphisms of free {\small DGCAs} have been investigated under the
name of derivation homotopies in \cite{SS07Structure}. Quite a
number of examples can be found in \cite{SS07Zoo}.\medskip

Besides the proof of the mentioned correspondence between Lie
2-algebras and 2-term Lie infinity algebras, the seminal work
\cite{BC04} provides a classification of all Lie infinity
algebras, whose only nontrivial terms are those of degree 0 and
$n-1$, by means of a Lie algebra, a representation and an
$(n+1)$-cohomology class; for a possible extension of this
classification, see \cite{Bae07}.

In this paper, we give an explicit categorical definition of Lie
3-algebras and prove that these are in 1-to-1 correspondence with
the 3-term Lie infinity algebras, whose bilinear and trilinear
maps vanish in degree $(1,1)$ and in total degree 1, respectively.
Note that a `3-term' Lie infinity algebra implemented by a
4-cocycle \cite{BC04} is an example of a Lie 3-algebra in the
sense of the present work.\medskip

The correspondence between categorified and bounded homotopy
algebras is expected to involve classical functors and chain maps,
like e.g. the normalization and Dold-Kan functors, the (lax and
oplax monoidal) Eilenberg-Zilber and Alexander-Whitney chain maps,
the nerve functor... We show that the challenge ultimately resides
in an incompatibility of the cartesian product of linear
$n$-categories with the monoidal structure of this category, thus
answering a question of \cite{Roy07}.\medskip

The paper is organized as follows. Section 2 contains all relevant
higher categorical definitions. In Section 3, we define Lie
3-algebras. The fourth section contains the proof of the mentioned
1-to-1 correspondence between categorified algebras and truncated
sh algebras -- the main result of this paper. A specific aspect of
the monoidal structure of the category of linear $n$-categories is
highlighted in Section 5. In the last section, we show that this
feature is an obstruction to the use of the Eilenberg-Zilber map
in the proof of the correspondence ``bracket functor -- chain
map''.

\section{Higher linear categories and bounded chain complexes of vector spaces}

Let us emphasize that notation and terminology used in the present
work originate in \cite{BC04}, \cite{Roy07}, as well as in
\cite{Lei04}. For instance, a linear $n$-category will be an (a
strict) $n$-category \cite{Lei04} in {\tt Vect}. Categories in
{\tt Vect} have been considered in \cite{BC04} and also called
internal categories or 2-vector spaces. In \cite{BC04}, see
Sections 2 and 3, the corresponding morphisms (resp. 2-morphisms)
are termed as linear functors (resp. linear natural transformations),
and the resulting 2-category is denoted by {\tt VectCat} and also
by {\tt 2Vect}. Therefore, the $(n+1)$-category made up by linear
$n$-categories ($n$-categories in {\tt Vect} or $(n+1)$-vector
spaces), linear $n$-functors... will be denoted by {\tt Vect}
$n$-{\tt Cat} or {\tt$(n+1)$Vect}.\medskip

The following result is known. We briefly explain it here as its
proof and the involved concepts are important for an easy reading
of this paper.

\begin{prop}\label{EquivCat} The categories {\tt Vect} $n$-{\tt Cat} of linear
$n$-categories and linear $n$-functors and {\tt C}$^{n+1}(${\tt Vect}$)$ of
$(n+1)$-term chain complexes of vector spaces and linear chain maps
are equivalent.\end{prop}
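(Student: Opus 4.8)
The plan is to construct an equivalence of categories by exhibiting a functor in each direction and natural isomorphisms exhibiting them as quasi-inverses; concretely, since the correspondence will turn out to be essentially an isomorphism on objects, it suffices to produce mutually inverse assignments on objects together with bijections on hom-sets that are compatible with composition. First I would spell out the data of a linear $n$-category $C$: sets of $k$-morphisms $C_k$ for $0\le k\le n$, each a vector space, together with source and target maps $s,t\colon C_k\to C_{k-1}$ (linear), identity-assigning maps $\mathrm{id}\colon C_{k-1}\to C_k$ (linear sections of both $s$ and $t$), and the various compositions $\circ_j$, all subject to the strict $n$-category axioms internalized in {\tt Vect}. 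The key structural observation, exactly as in the $n=1$ case treated by Baez--Crans, is that each $C_k$ splits canonically as $C_k \cong \mathrm{id}(C_{k-1}) \oplus N_k$ where $N_k := \ker s \cap \bigcap_{j} \ker(\text{the relevant projections})$ — more precisely one iterates the $n=1$ argument and sets $N_k := \bigcap_{j<k}\ker s_j$ restricted appropriately, so that as a vector space $C_k \cong V_0\oplus V_1\oplus\cdots\oplus V_k$ with $V_0 = C_0$ and $V_i = N_i$. Then the target map $t$ restricted to $N_k$, composed with the projection onto $N_{k-1}$, gives a linear map $\p_k\colon V_k\to V_{k-1}$, and I would check that $\p_{k-1}\p_k=0$ is forced by the interchange/associativity axioms. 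This yields the functor $F\colon$ {\tt Vect}$\,n$-{\tt Cat} $\to$ {\tt C}$^{n+1}(${\tt Vect}$)$ on objects, sending $C$ to $(V_0\xleftarrow{\p_1}V_1\leftarrow\cdots\xleftarrow{\p_n}V_n)$.

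Next I would define the functor on morphisms: a linear $n$-functor $C\to C'$ is built from linear maps $C_k\to C'_k$ commuting with all the structure; using the splittings, I would show such data is equivalent to a family of linear maps $\phi_k\colon V_k\to V'_k$ commuting with the differentials, i.e. a chain map — the components of the functor on identities are determined, and the components on the $N_k$-parts that do not come from lower degree are forced to vanish by compatibility with sources (this is where the ``no higher-cell-changing'' rigidity of strict $n$-functors enters). For the reverse functor $G\colon$ {\tt C}$^{n+1}(${\tt Vect}$)\to$ {\tt Vect}$\,n$-{\tt Cat}, I would take a chain complex $(V_\bullet,\p)$ and build the $n$-category with $C_k := V_0\oplus\cdots\oplus V_k$, source $s$ the projection forgetting the top summand, target $t(v_0,\dots,v_k) := (v_0,\dots,v_{k-2}, v_{k-1}+\p v_k)$, identities the evident inclusions, and composition along the $j$-th variable given by adding the ``new'' components — this is the standard generalization of the construction turning a $2$-term complex into a $2$-vector space. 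I would then verify the strict $n$-category axioms (functoriality of composition, interchange laws, unit laws) hold precisely because $\p^2=0$ and because addition is associative and commutative.

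Finally I would assemble the equivalence: check $F\circ G = \mathrm{id}$ on the nose (the splitting recovers exactly the summands one put in), and construct a natural isomorphism $G\circ F \Rightarrow \mathrm{id}$ out of the canonical splitting isomorphisms $C_k \cong V_0\oplus\cdots\oplus V_k$, verifying naturality in $C$ and that the components are genuine linear $n$-functors with $n$-functor inverses. The main obstacle I anticipate is bookkeeping rather than conceptual: carefully defining the iterated kernel decomposition of $C_k$ for general $k$ and checking that all the higher compositions $\circ_j$ are, under this decomposition, simply addition of the top components — so that the whole tower of interchange laws collapses to the single condition $\p^2=0$. Getting the indexing of sources, targets and the degree in which each composition acts right, uniformly in $n$, is the delicate point; once the decomposition lemma is in hand, both functors and the natural isomorphism are routine, which is why the proof can legitimately be described as ``known'' and sketched briefly.
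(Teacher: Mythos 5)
Your proposal follows essentially the same route as the paper: split $L_k\cong V_0\oplus\cdots\oplus V_k$ with $V_i=\ker s_i$ using the identity maps, take the differential to be $t$ restricted to these kernels, rebuild the $n$-category from a complex by the same explicit source/target/identity/``add the top components'' formulas, and note that one composite is the identity on the nose while the other is naturally isomorphic to it via the splitting. One small correction: $\partial_{k-1}\partial_k=0$ is forced by the globular identities $s\circ t=s\circ s$ and $t\circ t=t\circ s$ (for $v\in\ker s$ one gets $t(tv)=t(sv)=t(0)=0$), not by the interchange or associativity axioms.
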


We first recall some definitions.

\begin{defi} An {\bf $\mathbf n$-globular vector space} $L$, $n\in \N$, is a
sequence
\be\label{GlobVs}L_n\stackrel{s,t}{\rightrightarrows}L_{n-1}\stackrel{s,t}{\rightrightarrows}\ldots\stackrel{s,t}{\rightrightarrows}L_0\rightrightarrows
0,\ee of vector spaces $L_m$ and linear maps $s,t$ such that \be
s(s(a))=s(t(a))\;\;\mbox{and}\;\;t(s(a))=t(t(a)),\label{GlobCond}\ee
for any $a\in L_m,$ $m\in\{1,\ldots,n\}$. The maps $s,t$ are
called {\bf source map} and {\bf target map}, respectively, and
any element of $L_m$ is an {\bf m-cell}.\end{defi}

By higher category we mean in this text a {\bf strict} higher
category. Roughly, a linear $n$-category, $n\in\N$, is an
$n$-globular vector space endowed with compositions of $m$-cells,
$0<m\le n$, along a $p$-cell, $0\le p<m$, and an identity
associated to any $m$-cell, $0\le m<n$. Two $m$-cells $(a,b)\in
L_m\times L_m$ are composable along a $p$-cell, if
$t^{m-p}(a)=s^{m-p}(b)$. The composite $m$-cell will be denoted by
$a\circ_p b$ (the cell that `acts' first is written on the left)
and the vector subspace of $L_m\times L_m$ made up by the pairs of
$m$-cells that can be composed along a $p$-cell will be denoted by
$L_m\times_{L_p}L_m$. The following figure schematizes the
composition of two 3-cells along a 0-, a 1-, and a
2-cell.\vspace{5mm}
$$
\xymatrix@C=1.2pc@R=2pc{
&&\ar@/^1.6pc/@{->}[dd]\ar@/_1.6pc/@{->}[dd]&&&&\ar@/^1.6pc/@{->}[dd]\ar@/_1.6pc/@{->}[dd]&&\\
\bullet\ar@/^2.6pc/@{->}[rrrr]\ar@/_2.6pc/@{->}[rrrr]&\ar@{->}[rr]&&&\bullet\ar@/^2.6pc/@{->}[rrrr]\ar@/_2.6pc/@{->}[rrrr]&\ar@{->}[rr]&&&\bullet\\
&&&&&&}
\hspace{1cm}
\xymatrix@C=1.2pc@R=1pc{
&&\ar@/^1.6pc/@{->}[dd]\ar@/_1.6pc/@{->}[dd]&&\\
&\ar@{->}[rr]&&&\\
\bullet\ar@{->}[rrrr]\ar@/^3.2pc/@{->}[rrrr]\ar@/_3.2pc/@{->}[rrrr]&&\ar@/^1.6pc/@{->}[dd]\ar@/_1.6pc/@{->}[dd]&&\bullet\\
&\ar@{->}[rr]&&&\\
&&&&&&
}
\hspace{0cm}
\xymatrix@C=1.2pc@R=2pc{
&&\ar@{->}[dd]\ar@/^1.6pc/@{->}[dd]\ar@/_1.6pc/@{->}[dd]&&\\
\bullet\ar@/^2.6pc/@{->}[rrrr]\ar@/_2.6pc/@{->}[rrrr]&\ar@{->}[r]&\ar@{->}[r]&&\bullet\\
&&&&&&
}
$$

\begin{defi} A {\bf linear n-category}, $n\in\N$, is an $n$-globular vector space $L$ (with source and target maps $s,t$) together
with, for any $m\in\{1,\ldots,n\}$ and any $p\in\{0,\ldots,m-1\}$,
a linear composition map $\circ_p:L_m\times_{L_p}L_m\to L_m$ and,
for any $m\in\{0,\ldots,n-1\}$, a linear identity map $1:L_m\to
L_{m+1}$, such that the properties

\begin{itemize}\item for $(a,b)\in L_m\times_{L_p}L_m$, $$\mbox{if }\; p=m-1, \mbox{ then }\, s(a\circ_p b)=s(a) \mbox{ and }\; t(a\circ_pb)=t(b),$$
$$\mbox{if }\; p\le m-2, \mbox{ then }\, s(a\circ_p b)=s(a)\circ_ps(b) \mbox{ and }\; t(a\circ_pb)=t(a)\circ_pt(b),$$
\item $$s(1_a)=t(1_a)=a,$$ \item for any $(a,b),(b,c)\in
L_m\times_{L_p} L_m$, $$(a\circ_pb)\circ_p
c=a\circ_p(b\circ_pc),$$ \item
$$1^{m-p}_{s^{m-p}a}\circ_pa=a\circ_p1^{m-p}_{t^{m-p}a}=a$$
\end{itemize} are verified, as well as the compatibility
conditions \begin{itemize}\item for $q<p$, $(a,b),(c,d)\in
L_m\times_{L_p}L_m$ and $(a,c),(b,d)\in L_m\times_{L_q}L_m$,
$$(a\circ_pb)\circ_q(c\circ_pd)=(a\circ_qc)\circ_p(b\circ_qd),$$
\item for $m<n$ and $(a,b)\in L_m\times_{L_p}L_m$,
$$1_{a\circ_pb}=1_a\circ_p1_b.$$
\end{itemize}
\end{defi}

The morphisms between two linear $n$-categories are the linear
$n$-functors.

\begin{defi} A {\bf linear n-functor} $F:L\to L'$ between two linear
$n$-categories is made up by linear maps $F:L_m\to L'_m$,
$m\in\{0,\ldots,n\}$, such that the categorical structure --
source and target maps, composition maps, identity maps -- is
respected. \end{defi}

Linear $n$-categories and linear $n$-functors form a category {\tt
Vect} $n$-{\tt Cat}, see Proposition \ref{EquivCat}. To disambiguate this proposition, let us specify that the objects of {\tt C}$^{n+1}(${\tt Vect}$)$ are the complexes whose underlying vector space $V=\oplus_{i=0}^nV_i$ is made up by $n+1$ terms $V_i$.\medskip

The proof of Proposition \ref{EquivCat} is based upon the following result.

\begin{prop}\label{UniqueCatStr} Let $L$ be any $n$-globular vector space with linear identity maps. If $s_m$ denotes the restriction of the source map to $L_m$, the vector spaces
$L_m$ and $L'_m:=\oplus_{i=0}^mV_i$, $V_i:=\op{ker}s_i$,
$m\in\{0,\ldots,n\}$, are isomorphic. Further, the $n$-globular
vector space with identities can be completed in a unique way by
linear composition maps so to form a linear $n$-category. If we
identify $L_m$ with $L_m'$, this unique linear $n$-categorical
structure reads \be\label{source}
s(v_0,\ldots,v_m)=(v_{0},\ldots,v_{m-1}),\ee \be
\label{target}t(v_0,\ldots,v_m)=(v_0,\ldots,v_{m-1}+tv_m),\ee \be
\label{identity}1_{(v_0,\ldots,v_m)}=(v_{0},\ldots,v_m,0),\ee \be
\label{composition}(v_0,\ldots,v_m)\circ_p(v'_0,\ldots,v'_m)=(v_0,\ldots,v_p,v_{p+1}+v_{p+1}',\ldots,v_m+v_m'),\ee
where the two $m$-cells in Equation (\ref{composition}) are
assumed to be composable along a $p$-cell.\end{prop}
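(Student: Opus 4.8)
The plan is to proceed in three stages: first establish the vector-space isomorphism $L_m\cong\oplus_{i=0}^m V_i$, then derive the mandatory form of all structure maps from the axioms, and finally verify that the formulas (\ref{source})--(\ref{composition}) indeed satisfy all the axioms of a linear $n$-category.

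First I would build the isomorphism $L_m\cong L_m'=\oplus_{i=0}^m V_i$, $V_i=\ker s_i$, by induction on $m$. For $m=0$ we have $s_0=0$ so $L_0=V_0$. For the inductive step, the identity map $1:L_{m-1}\to L_m$ is a linear section of $s_m$ (since $s(1_a)=a$), hence $s_m$ is surjective and $L_m\cong \ker s_m\oplus \operatorname{im}1_{L_{m-1}}\cong V_m\oplus L_{m-1}$; composing with the inductive isomorphism for $L_{m-1}$ gives the claim. I would fix this identification so that a cell $a\in L_m$ is written $(v_0,\ldots,v_m)$ with $v_i\in V_i$, and so that $1_{(v_0,\ldots,v_{m-1})}=(v_0,\ldots,v_{m-1},0)$; this is (\ref{identity}) essentially by construction. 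Formula (\ref{source}) then holds because, under the identification, $s$ is (up to the iso) the projection killing the top component. For the target, I would use the globular condition together with the axiom $s(1_a)=t(1_a)=a$: applying $t$ to $(v_0,\ldots,v_m)$ and writing $tv_m$ for $t$ restricted to $V_m$ lands in $L_{m-1}$; the compatibility $s\circ t = s\circ s$ forces all but the last slot of $t(v_0,\ldots,v_m)$ to agree with those of $s$, giving (\ref{target}) with the last component corrected by $tv_m$.

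Next I would pin down the composition maps. The key observation is that the unit axiom $1^{m-p}_{s^{m-p}a}\circ_p a = a$ combined with the already-established formulas for $s$, $t$ and $1$ severely constrains $\circ_p$. Writing $a=(v_0,\ldots,v_m)$ and $b=(v_0',\ldots,v_m')$ composable along a $p$-cell (so $t^{m-p}a=s^{m-p}b$, which by the source/target formulas means $v_i=v_i'$ for $i\le p$ in the appropriate sense), I would argue that the source/target axioms for $\circ_p$ force the first $p+1$ components of $a\circ_p b$ to be $(v_0,\ldots,v_p)$, and then use bilinearity of $\circ_p$ together with the unit laws applied to the two "degenerate" cells $1^{m-p}_{s^{m-p}a}$ and $1^{m-p}_{t^{m-p}b}$ to show the remaining components must be $v_i+v_i'$ for $p<i\le m$. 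Bilinearity is what turns the two one-sided unit computations into the full formula: $a\circ_p b$ is linear in each slot and agrees with the expected answer when either slot is the relevant degenerate cell, and the space of composable pairs is spanned (as a consequence of the identification) by such configurations. This yields (\ref{composition}) and simultaneously its uniqueness.

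Finally I would verify that (\ref{source})--(\ref{composition}) do define a linear $n$-category: the globular conditions (\ref{GlobCond}), the source/target behavior of $\circ_p$ in both cases $p=m-1$ and $p\le m-2$, associativity, the unit laws, the interchange law for $q<p$, and $1_{a\circ_p b}=1_a\circ_p 1_b$. Each of these is a direct componentwise check using that the formulas only add top components and truncate; for instance interchange reduces to commutativity and associativity of $+$ in the vector spaces $V_i$. I expect the main obstacle to be the bookkeeping in the uniqueness argument for $\circ_p$ — making precise the claim that composable pairs are "generated" by degenerate configurations so that bilinearity propagates the unit-law computation to all pairs, and handling the two regimes $p=m-1$ versus $p\le m-2$ uniformly. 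The verification of the interchange law, while routine, also requires care to track which indices are affected by composition along $p$ versus along $q$.
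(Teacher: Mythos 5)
Your proposal is correct and follows essentially the same route as the paper: the identity maps are linear sections of the source maps and give the direct-sum decomposition (the paper writes the two mutually inverse maps explicitly instead of inducting), the source/target/identity formulas are read off through that identification, and the composition is forced by the unit laws together with linearity of $\circ_p$ on the fibred product $L_m\times_{L_p}L_m$, whose elements are exactly sums of the degenerate pairs you describe (the paper's choice is $w=1^{m-p}_{t^{m-p}v}$ and $v'=1^{m-p}_{s^{m-p}w'}$). One terminological caveat: $\circ_p$ is a \emph{linear} map on the subspace $L_m\times_{L_p}L_m$, not a bilinear map, and it is precisely this joint linearity on the fibred product that makes your spanning argument go through.
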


\begin{proof} As for the first part of this proposition, if $m=2$ e.g., it suffices to observe that the linear
maps
$$\za_L: L_2'=V_0\oplus V_1\oplus V_2\ni (v_0,v_1,v_2)\mapsto
1^2_{v_0}+1_{v_1}+v_2\in L_2$$ and $$\zb_L: L_2\ni a\mapsto
(s^2a,s(a-1^2_{s^2a}),a-1_{s(a-1^2_{s^2a})}-1^2_{s^2a})\in
V_0\oplus V_1\oplus V_2=L_2'$$ are inverses of each other. For arbitrary $m\in\{0,\ldots,n\}$ and $a\in L_m$, we set $$\zb_La=\left(s^ma,\ldots, s^{m-i}(a-\sum_{j=0}^{i-1}1^{m-j}_{p_{j}\zb_La}),\ldots,a-\sum_{j=0}^{m-1}1^{m-j}_{p_{j}\zb_La}\right)\in V_0\oplus\ldots\oplus V_i\oplus\ldots\oplus V_m= L'_m,$$ where $p_j$ denotes the projection $p_j:L_m'\to V_j$ and where the components must be computed from left to right.\medskip

For the second claim, note that when reading the source, target
and identity maps through the detailed isomorphism, we get
$s(v_0,\ldots,v_m)=(v_{0},\ldots,v_{m-1})$,
$t(v_0,\ldots,v_m)=(v_0,\ldots,v_{m-1}+tv_m)$, and
$1_{(v_0,\ldots,v_m)}=(v_{0},\ldots,v_m,0)$. Eventually, set
$v=(v_0,\ldots,v_m)$ and let $(v,w)$ and $(v',w')$ be two pairs of
$m$-cells that are composable along a $p$-cell. The composability
condition, say for $(v,w)$, reads
$$(w_0,\ldots,w_p)=(v_0,\ldots,v_{p-1},v_p+tv_{p+1}).$$ It follows
from the linearity of $\circ_p:L_m\times_{L_p}L_m\to L_m$ that
$(v+v')\circ_p(w+w')=(v\circ_pw)+(v'\circ_pw')$. When taking
$w=1^{m-p}_{t^{m-p}v}$ and $v'=1^{m-p}_{s^{m-p}w'}$, we find
$$(v_0+w'_0,\ldots,v_p+w'_p,v_{p+1},\ldots,v_m)\circ_p(v_0+w'_0,\ldots,v_p+w'_p+tv_{p+1},w'_{p+1},\ldots,w'_m)$$
$$=(v_0+w'_0,\ldots,v_m+w'_m),$$ so that $\circ_p$ is necessarily the composition given by Equation
(\ref{composition}). It is easily seen that, conversely, Equations
(\ref{source}) -- (\ref{composition}) define a linear
$n$-categorical structure. \end{proof}

\begin{proof}[Proof of Proposition \ref{EquivCat}] We define functors $\frak N:$ {\tt Vect} $n$-{\tt Cat} $\to $ {\tt C}$^{n+1}(${\tt Vect}$)$ and
$\frak G:$ {\tt C}$^{n+1}(${\tt Vect}$)$ $\to$ {\tt Vect} $n$-{\tt Cat} that
are inverses up to natural isomorphisms.\medskip

If we start from a linear $n$-category $L$, so in particular from
an $n$-globular vector space $L$, we define an $(n+1)$-term chain
complex ${\frak N}(L)$ by setting $V_m=\op{ker}s_m\subset L_m$ and
$d_m=t_m|_{V_m}:V_m\to V_{m-1}$. In view of the globular space
conditions (\ref{GlobCond}), the target space of $d_m$ is actually
$V_{m-1}$ and we have $d_{m-1}d_mv_m=0.$\medskip

Moreover, if $F:L\to L'$ denotes a linear $n$-functor, the value
${\frak N}(F):V\to V'$ is defined on $V_m\subset L_m$ by ${\frak
N}(F)_m=F_m|_{V_{m}}:V_m\to V'_{m}$. It is obvious that ${\frak
N}(F)$ is a linear chain map.\medskip

It is obvious that ${\frak N}$ respects the categorical structures
of {\tt Vect} $n$-{\tt Cat} and {\tt C}$^{n+1}$({\tt
Vect}).\medskip

As for the second functor $\frak G$, if $(V,d)$,
$V=\oplus_{i=0}^nV_i$, is an $(n+1)$-term chain complex of vector
spaces, we define a  linear $n$-category ${\frak G}(V)=L$,
$L_m=\oplus_{i=0}^mV_i$, as in Proposition \ref{UniqueCatStr}: the
source, target, identity and composition maps are defined by
Equations (\ref{source}) -- (\ref{composition}), except that
$tv_m$ in the {\small RHS} of Equation (\ref{target}) is replaced
by $dv_m$.\medskip

The definition of $\frak G$ on a linear chain map $\zf:V\to V'$
leads to a linear $n$-functor ${\frak G}(\zf):L\to L'$, which is
defined on $L_m=\oplus_{i=0}^mV_i$ by ${\frak
G}(\zf)_m=\oplus_{i=0}^m\zf_i$. Indeed, it is readily checked that
${\frak G}(\zf)$ respects the linear $n$-categorical structures of
$L$ and $L'$.\medskip

Furthermore, ${\frak G}$ respects the categorical structures of
{\tt C}$^{{n+1}}$({\tt Vect}) and {\tt Vect} $n$-{\tt
Cat}.\medskip

Eventually, there exist natural isomorphisms
$\za:\frak{NG}\Rightarrow\op{id}$ and
$\zg:\frak{GN}\Rightarrow\op{id}$.\medskip

To define a natural transformation
$\za:\frak{NG}\Rightarrow\op{id}$, note that $L'=(\frak{NG})(L)$
is the linear $n$-category made up by the vector spaces
$L'_m=\oplus_{i=0}^mV_i$, $V_i=\op{ker}s_i$, as well as by the
source, target, identities and compositions defined from $V={\frak
N}(L)$ as in the above definition of ${\frak G}(V)$, i.e. as in
Proposition \ref{UniqueCatStr}. It follows that $\za_L:L'\to L$,
defined by $\za_L:L_m'\ni(v_0,\ldots,v_m)\mapsto 1^m_{v_0}+\ldots
+1_{v_{m-1}}+v_m\in L_m$, $m\in\{0,\ldots,n\}$, which pulls the
linear $n$-categorical structure back from $L$ to $L'$, see
Proposition \ref{UniqueCatStr}, is an invertible linear
$n$-functor. Moreover $\za$ is natural in $L$.\medskip

It suffices now to observe that the composite $\frak{GN}$ is the
identity functor.\end{proof}

Next we further investigate the category {\tt Vect} $n$-{\tt Cat}.

\begin{prop} The category {\tt Vect} $n$-{\tt Cat} admits finite products.\end{prop}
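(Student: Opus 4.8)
The plan is to construct the product of two linear $n$-categories by hand and to verify the universal property one cell-dimension at a time, using that {\tt Vect} itself has finite products. Given linear $n$-categories $L$ and $L'$, I would form the $n$-globular vector space $L\times L'$ with $(L\times L')_m:=L_m\times L'_m$ and source/target maps $s\times s$, $t\times t$; the globular identities (\ref{GlobCond}) hold because they hold in each factor. I then equip it with componentwise identities $1_{(a,a')}:=(1_a,1_{a'})$ and componentwise compositions $(a,a')\circ_p(b,b'):=(a\circ_pb,\,a'\circ_pb')$.

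The only point requiring a genuine (but short) check is the domain of $\circ_p$: two $m$-cells $(a,a')$ and $(b,b')$ of $L\times L'$ are composable along a $p$-cell precisely when $(a,b)$ and $(a',b')$ are, because $t^{m-p}$ and $s^{m-p}$ in the product are computed componentwise. This yields a canonical linear isomorphism $(L\times L')_m\times_{(L\times L')_p}(L\times L')_m\cong(L_m\times_{L_p}L_m)\times(L'_m\times_{L'_p}L'_m)$, which makes the componentwise $\circ_p$ a well-defined linear map. After that, every axiom of a linear $n$-category — compatibility of $s,t$ with $\circ_p$, associativity, unitality, the interchange law $(a\circ_pb)\circ_q(c\circ_pd)=(a\circ_qc)\circ_p(b\circ_qd)$, and $1_{a\circ_pb}=1_a\circ_p1_b$ — holds in $L\times L'$ simply because it holds separately in $L$ and $L'$ and all operations are componentwise. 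The projections $\op{pr}:L\times L'\to L$ and $\op{pr}':L\times L'\to L'$ are linear $n$-functors for the same reason.

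For the universal property, given linear $n$-functors $F:M\to L$ and $F':M\to L'$, the maps $(F_m,F'_m):M_m\to L_m\times L'_m$ assemble into a linear $n$-functor $M\to L\times L'$ — each piece of categorical structure is respected because $F$ and $F'$ respect it — and it is the unique linear $n$-functor with $\op{pr}\circ(F,F')=F$ and $\op{pr}'\circ(F,F')=F'$, uniqueness being inherited levelwise from the universal property of $L_m\times L'_m$ in {\tt Vect}. The $n$-globular vector space with all terms $0$ is a linear $n$-category and is terminal. Having binary products and a terminal object, {\tt Vect} $n$-{\tt Cat} then admits all finite products.

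Alternatively, and equivalently, one can invoke Proposition \ref{EquivCat}: {\tt C}$^{n+1}(${\tt Vect}$)$ has finite products (the componentwise direct sum of chain complexes, with the zero complex terminal), and an equivalence of categories transports limits; one checks that $\frak G$ applied to such a direct sum is exactly the product $n$-category built above. I do not anticipate a real obstacle; the only subtlety is the identification of the composable-pairs subspace noted above, and the fact — not needed here but stressed later in the paper — that this cartesian product is \emph{not} the monoidal (tensor-type) product of {\tt Vect} $n$-{\tt Cat}.
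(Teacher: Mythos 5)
Your construction is correct and is essentially the paper's own proof: the paper also defines $L\times L'$ componentwise ($(L\times L')_m=L_m\times L'_m$, $S_m=s_m\times s'_m$, $T_m=t_m\times t'_m$, $I_m=1_m\times 1'_m$, $\bigcirc_p=\circ_p\times\circ'_p$) and notes the universal property is straightforwardly checked, merely invoking Proposition \ref{UniqueCatStr} to identify $\bigcirc_p$ as the unique compositions completing the globular space with identities, where you instead verify the axioms directly. Your extra care about the composable-pairs subspace and the terminal object are welcome details the paper leaves implicit.
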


Let $L$ and $L'$ be two linear $n$-categories. The product linear
$n$-category $L\times L'$ is defined by $(L\times L')_m=L_m\times
L'_m$, $S_m=s_m\times s'_m$, $T_m=t_m\times t'_m$, $I_m=1_m\times
1'_m$, and $\bigcirc_p=\circ_p\times \circ'_p$. The compositions
$\bigcirc_p$ coincide with the unique compositions that complete
the $n$-globular vector space with identities, thus providing a
linear $n$-category. It is straightforwardly checked that the
product of linear $n$-categories verifies the universal property
for binary products.

\begin{prop}\label{3-CatStr} The category {\tt Vect}\,$2$-{\tt Cat} admits a 3-categorical
structure. More precisely, its 2-cells are the linear natural
2-transformations and its 3-cells are the linear 2-modifications.
\end{prop}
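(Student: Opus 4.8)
\medskip

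The plan is to treat the statement as the {\tt Vect}-internal counterpart of the classical fact that strict $2$-categories, strict $2$-functors, strict $2$-natural transformations, and modifications assemble into a strict $3$-category (equivalently, that the category of strict $2$-categories is enriched over itself via the cartesian product), and to carry the usual constructions over verbatim. The single genuinely new point is that each structure map has to respect linearity, and this will turn out to be automatic.

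First I would make the two notions in the statement precise. A linear natural $2$-transformation $\theta\colon F\Rightarrow G$ between linear $2$-functors $F,G\colon L\to L'$ is a linear map $L_0\ni a\mapsto\theta_a\in L'_1$ with $s\theta_a=F(a)$ and $t\theta_a=G(a)$, natural with respect to the $1$-cells of $L$ (for $f\colon a\to b$ the square built from $\theta_a,\theta_b,F(f),G(f)$ commutes in $L'$) and compatible with the $2$-cells, the identities, and the compositions $\circ_0,\circ_1$ of $L'$. A linear $2$-modification $\mu$ from $\theta$ to $\eta$ (with $\theta,\eta\colon F\Rightarrow G$) is a linear map $L_0\ni a\mapsto\mu_a\in L'_2$ with $s\mu_a=\theta_a$ and $t\mu_a=\eta_a$, subject to the modification identity tying $\mu$ to the $2$-cell components of $\theta$ and $\eta$. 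Since the source, target, identity, and composition maps of a linear $n$-category are linear by definition, every one of these conditions is linear in $a$; hence these data genuinely form vector spaces, and so do all the composites below.

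Next I would write down, by the evident componentwise formulas, every composition operation required of a $3$-category: on linear $2$-functors, composition along a $0$-cell; on linear natural $2$-transformations, the vertical composition (apply $\circ_0$ of $L'$ componentwise), the horizontal Godement-type composition along a $0$-cell (whisker by a functor, then compose in the target), and the identity transformations; on linear $2$-modifications, the three composites of a $3$-category -- along a $2$-cell (apply $\circ_1$ of $L'$ componentwise), along a $1$-cell (apply $\circ_0$ of $L'$ componentwise), and along a $0$-cell (the modification version of the Godement product). In each case linearity is automatic: the components are linear by hypothesis, the $2$-functors are linear, the maps $\circ_p$ of the target are linear on the (linear) subspaces of composable pairs, and precomposition with the linear maps $a\mapsto F(a)$, $a\mapsto G(a)$ preserves linearity, so every composite is again a linear map into $L'_1$ or $L'_2$. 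It then remains to verify strict associativity and unitality of every composition, functoriality of the horizontal composites, and the interchange laws relating composites along cells of different dimension; componentwise these reduce to the interchange and associativity identities built into the definition of the linear $2$-category $L'$, together with the fact that a linear $2$-functor preserves all of that structure. The main obstacle -- such as it is -- will be purely organizational: checking that the two a priori different formulas for each Godement-type product agree (which uses naturality), and then grinding through the full list of interchange laws for $3$-cells, which is lengthy but mechanical and exhibits no phenomenon absent from the set-theoretic case.

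Finally, as an independent cross-check, one can transport the whole picture through the equivalence {\tt Vect}\,$2$-{\tt Cat}$\;\simeq\;${\tt C}$^{3}(${\tt Vect}$)$ of Proposition \ref{EquivCat}: under ${\frak N}$, a linear natural $2$-transformation $\theta\colon F\Rightarrow G$ corresponds to a chain homotopy from ${\frak N}(F)$ to ${\frak N}(G)$, a linear $2$-modification to a degree $2$ chain homotopy between two such homotopies, and the $3$-categorical operations to the ordinary addition and pasting of (higher) homotopies of $3$-term complexes, for which the axioms are immediate.
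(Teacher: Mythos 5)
Your proposal matches the paper's treatment: the paper likewise proves this proposition by observing that it is the linear version of the classical fact that $2$-{\tt Cat} is a $3$-category (with $2$-functors, natural $2$-transformations, and $2$-modifications as the higher cells), recording the linear definitions of natural $2$-transformations and $2$-modifications and noting that the standard constructions and coherence checks carry over, linearity of all composites being automatic from the linearity of the structure maps. Your additional cross-check via the equivalence with {\tt C}$^{3}(${\tt Vect}$)$ is a pleasant bonus not present in the paper, but the core argument is the same.
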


This proposition is the linear version (with similar proof) of the
well-known result that the category $2$-{\tt Cat} is a 3-category
with 2-categories as 0-cells, 2-functors as 1-cells, natural
2-transformations as 2-cells, and 2-modifications as 3-cells. The
definitions of $n$-categories and $2$-functors are similar to
those given above in the linear context (but they are formulated
without the use of set theoretical concepts). As for (linear)
natural $2$-transformations and (linear) 2-modifications, let us
recall their definition in the linear setting:

\begin{defi} A {\bf linear natural 2-transformation} $\theta: F\Rightarrow G$ between two
linear 2-functors $F,G:{\cal C}\to{\cal D}$, between the same two
linear 2-categories, assigns to any $a\in{\cal C}_0$ a unique
$\theta_a:F(a)\to G(a)$ in ${\cal D}_1$, linear with respect to
$a$ and such that for any $\za:f\Rightarrow g$ in ${\cal C}_2$,
$f,g:a\to b$ in ${\cal C}_1,$ we have \be\label{2-naturality}
F(\za)\circ_0
1_{\theta_b}=1_{\theta_a}\circ_0G(\za)\;.\ee\end{defi}

If ${\cal C}=L\times L$ is a product linear 2-category, the last
condition reads $$F(\za,\zb)\circ_0
1_{\theta_{t^2\za,t^2\zb}}=1_{\theta_{s^2\za,s^2\zb}}\circ_0G(\za,\zb),$$
for all $(\za,\zb)\in L_2\times L_2$. As functors respect
composition, i.e. as
$$F(\za,\zb)=F(\za\circ_01^2_{t^2\za},1^2_{s^2\zb}\circ_0\zb)=F(\za,1^2_{s^2\zb})\circ_0
F(1^2_{t^2\za},\zb),$$ this naturality condition is verified if
and only if it holds true in case all but one of the 2-cells are
identities $1^2_{-}$, i.e. if and only if the transformation is
natural with respect to all its arguments separately.

\begin{defi} Let $\cal{C,D}$ be two linear
2-categories. A {\bf linear 2-modification} $\zm:\zh\Rrightarrow
\ze$ between two linear natural 2-transformations
$\zh,\ze:F\Rightarrow G$, between the same two linear 2-functors
$F,G:\cal C\to \cal D$, assigns to any object $a\in{\cal C}_0$ a
unique $\zm_a:\zh_a\Rightarrow\ze_a$ in ${\cal D}_2$, which is
linear with respect to $a$ and such that, for any
$\za:f\Rightarrow g$ in ${\cal C}_2$, $f,g:a\to b$ in ${\cal
C}_1$, we have \be \label{2-modification}
F(\za)\circ_0\zm_b=\zm_a\circ_0G(\za).\ee\end{defi}

If ${\cal C}=L\times L$ is a product linear 2-category, it
suffices again that the preceding modification property be
satisfied for tuples $(\za,\zb),$ in which all but one 2-cells are
identities $1^2_{-}$. The explanation is the same as for natural
transformations.\medskip

Beyond linear $2$-functors, linear natural $2$-transformations,
and linear $2$-modifications, we use below multilinear cells.
Bilinear cells e.g., are cells on a product linear 2-category,
with linearity replaced by bilinearity. For instance,

\begin{defi} Let $L$, $L'$, and $L''$ be linear $2$-categories. A
{\bf bilinear 2-functor} $F:L\times L'\to L''$ is a $2$-functor
such that $F:L_m\times L'_m\to L''_m$ is bilinear for all
$m\in\{0,1,2\}$.\end{defi}

Similarly,

\begin{defi} Let $L$, $L'$, and $L''$ be linear $2$-categories. A {\bf bilinear natural 2-transformation} $\theta: F\Rightarrow G$ between two
bilinear 2-functors $F,G:L\times L'\to L''$, assigns to any
$(a,b)\in L_0\times L'_0$ a unique $\theta_{(a,b)}:F(a,b)\to
G(a,b)$ in $L''_1$, which is bilinear with respect to $(a,b)$ and
such that for any $(\za,\zb):(f,h)\Rightarrow (g,k)$ in $L_2\times
L_2'$, $(f,h),(g,k):(a,b)\to (c,d)$ in $L_1\times L'_1,$ we have
\be\label{2-naturality} F(\za,\zb)\circ_0
1_{\theta_{(c,d)}}=1_{\theta_{(a,b)}}\circ_0G(\za,\zb)\;.\ee\end{defi}

\section{Homotopy Lie algebras and categorified Lie algebras}

We now recall the definition of a Lie infinity (strongly homotopy
Lie, sh Lie, $L_{\infty}-$) algebra and specify it in the case of
a 3-term Lie infinity algebra.

\begin{defi} A {\bf Lie infinity algebra} is an $\N$-graded vector space
$V=\oplus_{i\in\N}V_i$ together with a family
$(\ell_i)_{i\in\N^*}$ of graded antisymmetric $i$-linear weight
$i-2$ maps on $V$, which verify the sequence of conditions
\be\label{LieInftyCond} \sum_{i+j=n+1}\sum_{(i,n-i)\mbox{ --
shuffles }
\zs}\chi(\zs)(-1)^{i(j-1)}\ell_j(\ell_i(a_{\zs_1},\ldots,a_{\zs_i}),a_{\zs_{i+1}},\ldots,a_{\zs_n})=0,\ee
where $n\in\{1,2,\ldots\}$, where $\chi(\zs)$ is the product of
the signature of $\zs$ and the Koszul sign defined by $\zs$ and
the homogeneous arguments $a_1,\ldots,a_n\in V$.\end{defi}

For $n=1$, the $L_{\infty}$-condition (\ref{LieInftyCond}) reads
$\ell_1^2=0$ and, for $n=2$, it means that $\ell_1$ is a graded
derivation of $\ell_2$, or, equivalently, that $\ell_2$ is a chain
map from $(V\otimes V,\ell_1\otimes \op{id}+\op{id}\otimes\;
\ell_1)$ to $(V,\ell_1).$\medskip

In particular,

\begin{defi} A {\bf 3-term Lie infinity algebra} is a 3-term graded vector
space $V=V_0\oplus V_1\oplus V_2$ endowed with graded
antisymmetric $p$-linear maps $\ell_p$ of weight $p-2$,
\be\begin{array}{ll} \ell_1:V_{i}\to V_{i-1}&(1\le i\le
2),\\\ell_2:V_i\times V_j\to V_{i+j}&(0\le i+j\le 2),\\
\ell_3:V_i\times V_j\times V_k\to V_{i+j+k+1}&(0\le i+j+k\le
1),\\\ell_4:V_0\times V_0\times V_0\times V_0\to V_2\end{array}\ee
(all structure maps $\ell_p$, $p>4$, necessarily vanish), which
satisfy $L_{\infty}$-condition (\ref{LieInftyCond}) (that is
trivial for all $n>5$).\end{defi}

In this 3-term situation, each $L_{\infty}$-condition splits into
a finite number of equations determined by the various
combinations of argument degrees, see below.\medskip

On the other hand, we have the

\begin{defi}\label{Lie3Alg} A {\bf Lie 3-algebra} is a {\bf linear 2-category} $L$ endowed
with a {\bf bracket}, i.e. an antisymmetric bilinear 2-functor
$[-,-]:L\times L\to L$, which verifies the Jacobi identity up to a
{\bf Jacobiator}, i.e. a skew-symmetric trilinear natural
2-transformation
\be\label{Jacobiator}J_{xyz}:[[x,y],z]\rightarrow[[x,z],y]+[x,[y,z]],\ee
$x,y,z\in L_0$, which in turn satisfies the Baez-Crans Jacobiator
identity up to an {\bf Identiator}, i.e. a skew-symmetric
quadrilinear 2-modification $$\zm_{xyzu}:
[J_{x,y,z},1_u]\circ_0(J_{[x,z],y,u}+J_{x,[y,z],u})\circ_0([J_{xzu},1_y]+1)\circ_0([1_x,J_{yzu}]+1)$$
\be\label{Identiator}\Rightarrow
J_{[x,y],z,u}\circ_0([J_{xyu},1_z]+1)\circ_0(J_{x,[y,u],z}+J_{[x,u],y,z}+J_{x,y,[z,u]}),\ee
$x,y,z,u\in L_0$, required to verify the {\bf coherence law}
\be\label{CohLaw0}\za_1+\za_4^{-1}=\za_3+\za_2^{-1},\ee where
$\za_1$ -- $\za_4$ are explicitly given in Definitions
\ref{Alpha1} -- \ref{Alpha4} and where superscript $-1$ denotes the inverse for composition along a 1-cell.\end{defi}

Just as the {\it Jacobiator} is a natural transformation between
the two sides of the {\it Jacobi} identity, the {\it Identiator}
is a modification between the two sides of the Baez-Crans
Jacobiator {\it identity}.\medskip

In this definition ``skew-symmetric 2-transformation'' (resp.
``skew-symmetric 2-modification'') means that, if we identify
$L_m$ with $\oplus_{i=0}^m V_i$, $V_i=\ker s_i$, as in Proposition
\ref{UniqueCatStr}, the $V_1$-component of $J_{xyz}\in L_1$ (resp.
the $V_2$-component of $\zm_{xyzu}\in L_2$) is antisymmetric.
Moreover, the definition makes sense, as the source and target in
Equation (\ref{Identiator}) are quadrilinear natural
2-transformations between quadrilinear 2-functors from $L^{\times
4}$ to $L.$ These 2-functors are simplest obtained from the
{\small RHS} of Equation (\ref{Identiator}). Further, the
mentioned source and target actually are natural
2-transformations, since a 2-functor composed (on the left or on
the right) with a natural 2-transformation is again a
2-transformation.

\section{Lie 3-algebras in comparison with 3-term Lie infinity algebras}

\begin{rem}In the following, we systematically identify the vector spaces $L_m$, $m\in\{0,\ldots,n\}$, of a
linear $n$-category with the spaces $L_m'=\oplus_{i=0}^mV_i,$
$V_i=\ker s_i$, so that the categorical structure is given by
Equations (\ref{source}) -- (\ref{composition}). In addition, we
often substitute common, index-free notations $($e.g. $\za=(x,{\bf
f},{\bf a})$$)$ for our notations $($e.g. $v=(v_0,v_1,v_2)\in
L_2$$)$. \label{BasicIdentification}\end{rem}

The next theorem is the main result of this paper.

\begin{theo}\label{MainTheo} There exists a 1-to-1 correspondence between Lie 3-algebras and 3-term Lie infinity algebras $(V,\ell_p),$
whose structure maps $\ell_2$ and $\ell_3$ vanish on $V_1\times
V_1$ and on triplets of total degree 1, respectively.
\end{theo}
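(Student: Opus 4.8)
The plan is to set up an explicit dictionary between the two sides, using Proposition~\ref{UniqueCatStr} to strip away the categorical packaging. On the homotopy side we have a 3-term graded vector space $V=V_0\oplus V_1\oplus V_2$ with maps $\ell_1,\ell_2,\ell_3,\ell_4$; on the categorical side we have a linear 2-category $L$, which by Proposition~\ref{UniqueCatStr} is determined by the 3-term complex $L_0=V_0$, $L_1=V_0\oplus V_1$, $L_2=V_0\oplus V_1\oplus V_2$ with $d=\ell_1$, together with the bracket functor $[-,-]$, the Jacobiator $J$, and the Identiator $\mu$. So I would start by unwinding what an antisymmetric bilinear 2-functor $[-,-]\colon L\times L\to L$ amounts to: since a bilinear 2-functor is determined by its action on $L_0\times L_0$, $L_1\times L_1$, $L_2\times L_2$ compatibly with $s,t$, identities and $\circ_0,\circ_1$, its components decompose (via the identification $L_m=\oplus_{i\le m}V_i$) into a family of bilinear maps $V_i\times V_j\to V_{i+j}$ for $i+j\le 2$ — these will be exactly the $\ell_2$ data, the functoriality constraints forcing gradedness and the chain-map property, i.e.\ the $n=2$ instance of (\ref{LieInftyCond}). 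The antisymmetry of the 2-functor gives graded antisymmetry of $\ell_2$, except that the naive categorical antisymmetry on $L_1\times L_1$ produces no constraint in bidegree $(1,1)$ landing in $V_2$ — this is precisely why the hypothesis "$\ell_2$ vanishes on $V_1\times V_1$" must be imposed, and I would flag this as the first place the correspondence is not automatic.

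Next I would analyze the Jacobiator. A skew-symmetric trilinear natural 2-transformation $J_{xyz}\colon[[x,y],z]\to[[x,z],y]+[x,[y,z]]$ assigns to each $(x,y,z)\in V_0^{\times 3}$ an element $J_{xyz}\in L_1=V_0\oplus V_1$; its source and target being equal (both sides are honest 1-cells with matching endpoints after using the chain-map property) forces the $V_0$-component of $J$ to be $\ell_2(\ell_2(x,y),z)-\ell_2(\ell_2(x,z),y)-\ell_2(x,\ell_2(y,z))$, while the $V_1$-component is a genuinely new trilinear map $V_0^{\times 3}\to V_1$, which I will identify with $\ell_3$ restricted to $V_0^{\times 3}$. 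The naturality condition (\ref{2-naturality}) for $J$, spelled out on 2-cells of the form $(\alpha,1,1)$ etc.\ (which by the remark after Definition of linear natural 2-transformations suffices), will unpack into the mixed-degree components of $\ell_3$ on triplets with one argument in $V_1$ — hence $\ell_3\colon V_i\times V_j\times V_k\to V_{i+j+k+1}$ for $0\le i+j+k\le 1$ — together with the $L_\infty$ relations in total degrees $3$ and the relevant split pieces. The hypothesis that $\ell_3$ vanishes on triplets of total degree $1$ is the analogue of the bidegree-$(1,1)$ restriction for $\ell_2$: it is the degree in which the categorical skew-symmetry / naturality data gives nothing, so it must be assumed to make the map to homotopy algebras land where claimed and to make it invertible.

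Then I would treat the Identiator $\mu_{xyzu}$, a skew-symmetric quadrilinear 2-modification whose $V_2$-component is a new quadrilinear map $V_0^{\times 4}\to V_2$, to be identified with $\ell_4$; its being a modification, i.e.\ condition (\ref{2-modification}) applied with all-but-one arguments identities, will yield the $L_\infty$ relations $n=4$ (involving $\ell_2,\ell_3,\ell_4$) and their degree-split pieces, and the source/target of $\mu$ being well-defined quadrilinear natural 2-transformations is what encodes the Baez--Crans Jacobiator identity. Finally, the coherence law (\ref{CohLaw0}), $\alpha_1+\alpha_4^{-1}=\alpha_3+\alpha_2^{-1}$, with $\alpha_1$--$\alpha_4$ the four pentagon-type composites built from $\mu$ and $J$ (Definitions~\ref{Alpha1}--\ref{Alpha4}), must be shown equivalent to the single remaining $L_\infty$-condition $n=5$, i.e.\ the relation among $\ell_2,\ell_3,\ell_4$ on five arguments of total degree $0$ (all in $V_0$), since all higher $\ell_p$ vanish. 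I would then assemble the two assignments — $(L,[-,-],J,\mu)\mapsto(V,\ell_1,\ell_2,\ell_3,\ell_4)$ and its inverse, reconstructing the categorical structure from the $\ell_p$ via Proposition~\ref{UniqueCatStr} and the formulas just derived — and check they are mutually inverse, which is essentially formal once the component-wise dictionary is established.

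I expect the main obstacle to be bookkeeping rather than conceptual: verifying that the coherence law (\ref{CohLaw0}) is \emph{exactly} the $n=5$ sh-Lie identity — and not something weaker or stronger — requires carefully expanding each $\alpha_i$ as an iterated $\circ_0/\circ_1$ composite of instances of $J$ and $\mu$, tracking Koszul and shuffle signs through the identification $L_2\cong V_0\oplus V_1\oplus V_2$, and matching the $V_2$-components term by term against the $28$-term (after sign-grouping) $L_\infty$ relation in top arity. The parallel but lower-complexity checks — that $J$'s naturality reproduces the degree-$3$ relations and $\mu$'s modification property the degree-$4$ ones — are where the "vanishing on $V_1\times V_1$ / total degree $1$" hypotheses earn their keep, so I would isolate and justify those two degeneracies early, before the sign-heavy pentagon computation.
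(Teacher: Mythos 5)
Your plan coincides with the paper's proof, which proceeds by exactly the same five-step dictionary: underlying 3-term complex $\leftrightarrow$ linear 2-category, bracket functor $\leftrightarrow$ $\ell_2$ (with the vanishing on $V_1\times V_1$ forced by the compatibility of the bracket with compositions and the resulting cohomological ambiguity), Jacobiator $\leftrightarrow$ $\ell_3$ (with the total-degree-1 vanishing), Identiator $\leftrightarrow$ $\ell_4$, and coherence law $\leftrightarrow$ the $n=5$ sh-Lie identity, the last step being the sign-heavy bookkeeping you anticipate. One small correction: the $V_0$-component of $J_{xyz}\in L_1\cong V_0\oplus V_1$ is its source $[[x,y],z]=\ell_2(\ell_2(x,y),z)$, not the Jacobi anomaly; it is the target condition $tJ_{xyz}=[[x,z],y]+[x,[y,z]]$, i.e.\ $\ell_1(\ell_3(x,y,z))=-\bigl(\ell_2(\ell_2(x,y),z)-\ell_2(\ell_2(x,z),y)+\ell_2(\ell_2(y,z),x)\bigr)$, that encodes the degree-0 part of the $n=3$ relation.
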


\begin{ex} There exists a 1-to-1 correspondence between $(n+1)$-term Lie infinity algebras $V=V_0\oplus V_n$ (whose intermediate terms vanish), $n\ge 2$, and $(n+2)$-cocycles of Lie
algebras endowed with a linear representation, see \cite{BC04}, Theorem 6.7. A 3-term Lie infinity algebra implemented by a 4-cocycle can therefore be viewed as a special case of a Lie 3-algebra.\end{ex}

The proof of Theorem \ref{MainTheo} consists of five lemmas.

\subsection{Linear 2-category -- three term chain complex of vector spaces}

First, we recall the correspondence between the underlying structures of a Lie 3-algebra and a 3-term Lie infinity algebra.

\begin{lem} There is a bijective correspondence between linear 2-categories $L$ and 3-term chain complexes of vector spaces $(V,\ell_1)$.\end{lem}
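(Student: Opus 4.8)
The plan is to invoke Proposition~\ref{EquivCat} in the case $n=2$, which already establishes an equivalence between {\tt Vect}\,$2$-{\tt Cat} and {\tt C}$^3(${\tt Vect}$)$, and to extract from it the bijective correspondence at the level of objects. Concretely, given a linear $2$-category $L$, I would apply the functor $\frak N$ from the proof of Proposition~\ref{EquivCat}: set $V_m:=\ker s_m\subset L_m$ for $m\in\{0,1,2\}$ and $\ell_1:=d_m:=t_m|_{V_m}:V_m\to V_{m-1}$. The globular identities (\ref{GlobCond}) give $\ell_1^2=0$, so $(V,\ell_1)$ is a $3$-term chain complex. Conversely, given $(V,\ell_1)$ with $V=V_0\oplus V_1\oplus V_2$, apply $\frak G$: put $L_m:=\oplus_{i=0}^m V_i$ and define the source, target, identity and composition maps by Equations (\ref{source})--(\ref{composition}), with $tv_m$ replaced by $\ell_1 v_m$ in (\ref{target}). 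Proposition~\ref{UniqueCatStr} guarantees this is a (the unique compatible) linear $2$-categorical structure.

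Next I would check that these two assignments are mutually inverse \emph{as maps on objects}, which is the content needed for a bijective correspondence (as opposed to the full equivalence of categories already proved). Starting from $(V,\ell_1)$, forming $\frak G(V)=L$ and then $\frak N(L)$ returns $\ker s_m$ in each degree; by construction $s_m(v_0,\ldots,v_m)=(v_0,\ldots,v_{m-1})$, so $\ker s_m$ is exactly the last summand $V_m$, and $t_m|_{V_m}$ is exactly $\ell_1$ by the modified (\ref{target}). Hence $\frak{NG}=\op{id}$ on the nose, as already noted at the end of the proof of Proposition~\ref{EquivCat}. In the other direction, starting from a linear $2$-category $L$ and forming $\frak N(L)=(V,\ell_1)$ and then $\frak G(V)$, Proposition~\ref{UniqueCatStr} tells us the resulting categorical structure on $\oplus_{i=0}^m V_i$ is the \emph{unique} one completing the globular space with identities; since $L$ itself is such a completion and the isomorphisms $\za_L,\zb_L$ identify $L_m$ with $\oplus_{i=0}^m\ker s_i$, we recover $L$ up to this canonical identification. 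Under the standing identification of Remark~\ref{BasicIdentification}, this is literally an equality, so the correspondence is a genuine bijection.

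The main (and only real) obstacle is bookkeeping rather than mathematics: one must verify that nothing in a linear $2$-category beyond the underlying $2$-globular vector space with identities carries independent data. This is precisely the uniqueness clause of Proposition~\ref{UniqueCatStr} — the composition maps $\circ_0,\circ_1$ are forced by linearity together with the unit and source/target axioms, as the computation in that proof shows (taking one argument to be an identity cell and using bilinearity of $\circ_p$ to reduce the general composite to a sum). So the compositions contribute no extra freedom, and the datum of $L$ reduces exactly to $(V_0,V_1,V_2,\ell_1)$ with $\ell_1^2=0$. I would therefore present the lemma's proof as a short application of Propositions~\ref{EquivCat} and~\ref{UniqueCatStr}, emphasizing the identification of Remark~\ref{BasicIdentification} and pointing to the uniqueness of the completing composition maps as the step that makes the correspondence bijective rather than merely essentially surjective.
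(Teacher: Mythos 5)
Your proposal is correct and follows essentially the same route as the paper: the lemma is proved by specializing the functors $\frak N$ and $\frak G$ from the proof of Proposition~\ref{EquivCat} to $n=2$ and observing that, under the identification of Remark~\ref{BasicIdentification}, they are mutually inverse, with Proposition~\ref{UniqueCatStr} supplying the uniqueness of the completing compositions. Your explicit appeal to that uniqueness clause is just a slightly more detailed spelling-out of what the paper leaves implicit.
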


\begin{proof} In the proof of Proposition \ref{EquivCat}, we associated to any linear $2$-category $L$ a unique 3-term chain complex of vector spaces
${\frak N}(L)=V$, whose spaces are given by $V_m=\ker s_m$,
$m\in\{0,1,2\}$, and whose differential $\ell_1$ coincides on
$V_m$ with the restriction $t_m|_{V_m}$. Conversely, we assigned
to any such chain complex $V$ a unique linear 2-category ${\frak
G}(V)=L$, with spaces $L_m=\oplus_{i=0}^mV_i$, $m\in \{0,1,2\}$
and target $t_0(x)=0,$ $t_1(x,{\bf f})=x+\ell_1{\bf f}$,
$t_2(x,{\bf f},{\bf a})=(x,{\bf f}+\ell_1{\bf a})$. In view of
Remark \ref{BasicIdentification}, the maps $\frak N$ and $\frak G$
are inverses of each other.\end{proof}

\begin{rem} The globular space condition is the categorical counterpart of $L_{\infty}$-condition $n=1$.\end{rem}

\subsection{Bracket -- chain map}

We assume that we already built $(V,\ell_1)$ from $L$ or $L$ from $(V,\ell_1)$.

\begin{lem}\label{Lem2} There is a bijective correspondence between antisymmetric bilinear 2-functors $[-,-]$ on $L$ and graded antisymmetric chain maps $\ell_2:(V\otimes V,\ell_1\otimes\op{id}+\op{id}\otimes \ell_1)\to (V,\ell_1)$ that vanish on $V_1\times V_1$.\end{lem}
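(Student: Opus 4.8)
The plan is to unwind the definition of a bilinear 2-functor $[-,-]:L\times L\to L$ term by term, using the explicit description of the categorical structure on $L=\bigoplus_{i=0}^2 V_i$ given by Equations (\ref{source})--(\ref{composition}), and to match the resulting data with a graded antisymmetric chain map $\ell_2$ on $V\otimes V$. First I would write a general $m$-cell of the product $L\times L$ in the identified form, and apply $F=[-,-]$ componentwise; since $F$ is bilinear and must commute with the identity maps $1:L_m\to L_{m+1}$, the component $F:L_m\times L_m\to L_m$ is completely determined by its restriction to $\ker s_m\times\ker s_m = V_m\times V_m$ together with the lower-degree data — this is the exact analogue of the remark, made earlier for natural $2$-transformations, that compatibility with identities reduces everything to the case where all but one cell is a strict identity $1^2_{-}$. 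Carrying this out for $m=0,1,2$ produces bilinear maps $V_i\times V_j\to V_{i+j}$ for $0\le i+j\le 2$, i.e. exactly the components of $\ell_2$; antisymmetry of the $2$-functor translates directly into graded antisymmetry of $\ell_2$.

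Next I would extract the chain-map condition. Compatibility of $F$ with the source and target maps $s,t$ — using the formulas (\ref{source}) and (the $\ell_1$-version of) (\ref{target}) — forces, on the $V_2$-component and the $V_1$-component, precisely the relations saying that $\ell_2$ intertwines $\ell_1\otimes\op{id}+\op{id}\otimes\ell_1$ with $\ell_1$; that is, $\ell_1\ell_2(a,b)=\pm\ell_2(\ell_1 a,b)\pm\ell_2(a,\ell_1 b)$ in the appropriate degrees. The functoriality of $F$ with respect to the compositions $\circ_p$ (\ref{composition}) should, after the identity-reduction above, give nothing beyond bilinearity and the source/target compatibility already recorded — this has to be checked but is routine given Proposition \ref{UniqueCatStr}, which pins down $\circ_p$ uniquely. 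Finally, the condition that $F$ vanish on $V_1\times V_1$ is simply the statement that the degree-$(1,1)$ component of $\ell_2$ is zero, so it transfers across the correspondence verbatim.

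For the converse direction I would start from a graded antisymmetric chain map $\ell_2$ vanishing on $V_1\times V_1$, define $[-,-]$ on each $L_m=\bigoplus_{i=0}^m V_i$ by the bilinear extension dictated by its components $\ell_2:V_i\times V_j\to V_{i+j}$ (with the degree-$0$ identity-padding prescribed by (\ref{identity}), mirroring the definition of $\za_L$ in the proof of Proposition \ref{UniqueCatStr}), and then verify that this respects $s$, $t$, the identities, and all the $\circ_p$; the source/target compatibility is where the chain-map identity gets \emph{used}, and the composition compatibility follows from bilinearity together with the additive form of (\ref{composition}). One then checks that the two assignments are mutually inverse, which is immediate under the identification of Remark \ref{BasicIdentification} since both are determined by the same family of component maps $V_i\times V_j\to V_{i+j}$.

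The main obstacle I anticipate is bookkeeping rather than conceptual: getting the Koszul/degree signs in the chain-map identity to come out consistently with the sign conventions implicit in (\ref{target}) and in the graded antisymmetry of $\ell_2$, and making fully precise the claim that a bilinear $2$-functor is determined by its $\ker s_m\times\ker s_m$ components — i.e. that the "identity-reduction" argument used informally for $2$-transformations genuinely applies here and leaves no residual constraints from the higher compatibility axioms (the interchange law and $1_{a\circ_p b}=1_a\circ_p 1_b$). I would isolate that reduction as the first step and treat the sign verification as a careful but mechanical computation degree-by-degree.
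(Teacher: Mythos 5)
There is a genuine gap, and it sits exactly at the point the lemma is designed to address. You assert that unwinding the bilinear $2$-functor produces bilinear maps $V_i\times V_j\to V_{i+j}$ for all $0\le i+j\le 2$, ``i.e.\ exactly the components of $\ell_2$'', and that the hypothesis of vanishing on $V_1\times V_1$ ``transfers verbatim''. Neither is true. The bracket of two cells ${\bf f},{\bf g}\in V_1\subset L_1$ is an element of $L_1$, whereas the weight-$0$ component of $\ell_2$ in bidegree $(1,1)$ takes values in $V_2$; moreover graded antisymmetry in that bidegree means \emph{symmetry}, while the functor is plainly antisymmetric. So the $(1,1)$-component of $\ell_2$ is simply not part of the functor's data. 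What the functor does determine is $[{\bf f},{\bf g}]=[1_{t{\bf f}},{\bf g}]=\ell_2(\ell_1{\bf f},{\bf g})\in V_1$, and compatibility with (\ref{Chain2}) and (\ref{Chain4}) only constrains $\ell_2|_{V_1\times V_1}$ to be valued in $2$-cocycles and to kill $1$-coboundaries. The hypothesis that $\ell_2$ vanish on $V_1\times V_1$ is therefore not a condition mirrored on the functor side; it is the normalization that makes the assignment ``functor $\mapsto$ chain map'' well defined and the reverse assignment injective. Without isolating this, your final ``mutually inverse'' step does not go through.

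The second gap is your claim that functoriality with respect to the compositions $\circ_p$ gives ``nothing beyond bilinearity and the source/target compatibility''. That reduction (Proposition \ref{LinnFun}) holds for \emph{linear} families but fails for \emph{bilinear} ones --- this is precisely the phenomenon analysed in Section 5, Equation (\ref{NonFunUP}): expanding $[v\circ_p v',w\circ_p w']$ by bilinearity produces cross terms absent from $[v,w]\circ_p[v',w']$. Evaluating the composition axiom on suitable pairs (e.g.\ $[{\bf f}\circ_01_{t{\bf f}},1_0\circ_0{\bf g}]$) yields the genuine extra constraints (\ref{FunComp1})--(\ref{FunComp3}), among them $[{\bf f},{\bf g}]=[1_{t{\bf f}},{\bf g}]$, $[{\bf a},{\bf b}]=0$ and $[1_{\bf f},{\bf b}]=0$; these are exactly what determines the brackets in the problematic bidegrees and what must be matched against (\ref{Chain2}) and (\ref{Chain4}). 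The sign bookkeeping you flag as the main difficulty is in fact routine; the real content of the lemma lies in the two points above.
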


\begin{proof} Consider first an antisymmetric bilinear ``2-map'' $[-,-]:L\times L\to L$ that verifies all functorial requirements except as concerns composition. This bracket then respects the compositions, i.e., for each pairs $(v,w), (v',w')\in L_m\times L_m$, $m\in\{1,2\}$, that are composable along a $p$-cell, $0\le p<m$, we have \be\label{FunCompGen}[v\circ_pv',w\circ_pw']=[v,w]\circ_p[v',w'],\ee if and only if the following conditions hold true, for any ${\bf f,g}\in V_1$ and any ${\bf a,b}\in V_2$: \be\label{FunComp1}[{\bf f},{\bf g}]=[1_{t{\bf f}},{\bf g}]=[{\bf f},1_{t{\bf g}}],\ee \be\label{FunComp2}[{\bf a},{\bf b}]=[1_{t{\bf a}},{\bf b}]=[{\bf a},1_{t{\bf b}}]=0,\ee \be\label{FunComp3}[1_{{\bf f}},{\bf b}]=[1^2_{t{\bf f}},{\bf b}]=0.\ee To prove the first two conditions, it suffices to compute $[{\bf f}\circ_01_{t{\bf f}},1_0\circ_0{\bf g}]$, for the next three conditions, we consider $[{\bf a}\circ_11_{t{\bf a}},1_0\circ_1{\bf b}]$ and $[{\bf a}\circ_01^2_{0},1^2_0\circ_0{\bf b}]$, and for the last two, we focus on $[1_{\bf f}\circ_01^2_{t{\bf f}},1^2_0\circ_0{\bf b}]$ and $[1_{\bf f}\circ_0(1^2_{t{\bf f}}+1_{\bf f'}),{\bf b}\circ_0{\bf b'}].$ Conversely, it can be straightforwardly checked that Equations (\ref{FunComp1}) -- (\ref{FunComp3}) entail the general requirement (\ref{FunCompGen}).\medskip

On the other hand, a graded antisymmetric bilinear weight 0 map
$\ell_2:V\times V\to V$ commutes with the differentials $\ell_1$
and $\ell_1\otimes\op{id}+\op{id}\otimes \ell_1$, i.e., for all
$v,w\in V$, we have \be\label{ChainGen}
\ell_1(\ell_2(v,w))=\ell_2(\ell_1v,w)+(-1)^v\ell_2(v,\ell_1w)\ee (we assumed that $v$ is homogeneous and denoted its degree by $v$ as well),
if and only if, for any $y\in V_0$, ${\bf f,g}\in V_1$, and ${\bf
a}\in V_2$, \be\label{Chain1}\E_1(\E_2({\bf f},y))=\E_2(\E_1{\bf
f},y),\ee \be\label{Chain2} \E_1(\E_2({\bf f},{\bf
g}))=\E_2(\E_1{\bf f},{\bf g})-\E_2({\bf f},\E_1{\bf g}),\ee
\be\label{Chain3}\E_1(\E_2({\bf a},y))=\E_2(\E_1{\bf a},y),\ee
\be\label{Chain4}0=\E_2(\E_1{\bf f},{\bf b})-\E_2({\bf f},\E_1{\bf
b}).\ee

\begin{rem} Note that, in the correspondence $\ell_1\leftrightarrow t$ and $\ell_2\leftrightarrow [-,-]$, Equations
(\ref{Chain1}) and (\ref{Chain3}) read as compatibility
requirements of the bracket with the target and that Equations
(\ref{Chain2}) and (\ref{Chain4}) correspond to the second
conditions of Equations (\ref{FunComp1}) and (\ref{FunComp3}),
respectively.\end{rem}

{\it Proof of Lemma \ref{Lem2} (continuation)}. To prove the
announced 1-to-1 correspondence, we first define a graded
antisymmetric chain map ${\frak N}([-,-])=\E_2$, $\ell_2:V\otimes
V\to V$ from any antisymmetric bilinear 2-functor $[-,-]:L\times
L\to L$.

Let $x,y\in V_0$, ${\bf f,g}\in V_1$, and ${\bf a,b}\in V_2$. Set
$\E_2(x,y)=[x,y]\in V_0$ and $\E_2(x,{\bf g})=[1_{x},{\bf g}]\in
V_1$. However, we must define $\E_2({\bf f},{\bf g})\in V_2$,
whereas $[{\bf f},{\bf g}]\in V_1.$ Moreover, in this case, the
antisymmetry properties do not match. The observation
$$[{\bf f},{\bf g}]=[1_{t{\bf f}},{\bf g}]=[{\bf f},1_{t{\bf g}}]=\E_2(\E_1{\bf f},{\bf g})=\E_2({\bf f},\ell_1{\bf g})$$ and Condition (\ref{Chain2})
force us to {\it define $\E_2$ on $V_1\times V_1$ as a symmetric
bilinear map valued in $V_2\cap\ker \E_1.$} We further set
$\E_2(x,{\bf b})=[1^2_{x},{\bf b}]\in V_2,$ and, as $\E_2$ is
required to have weight 0, we must set $\E_2({\bf f},{\bf b})=0$
and $\E_2({\bf a},{\bf b})=0.$ It then follows from the functorial
properties of $[-,-]$ that the conditions (\ref{Chain1}) --
(\ref{Chain3}) are verified. In view of Equation (\ref{FunComp3}),
Property (\ref{Chain4}) reads
$$0=[1^2_{t{\bf f}},{\bf b}]-\E_2({\bf f},\E_1{\bf b})=-\E_2({\bf f},\E_1{\bf b}).$$ In other
words, in addition to the preceding requirement, we must {\it
choose $\E_2$ in a way that it vanishes on $V_1\times V_1$ if
evaluated on a 1-coboundary.} These conditions are satisfied if we
choose $\E_2=0$ on $V_1\times V_1$.\medskip

Conversely, from any graded antisymmetric chain map $\E_2$ that
vanishes on $V_1\times V_1$, we can construct an antisymmetric
bilinear 2-functor ${\frak G}(\ell_2)=[-,-]$. Indeed, using
obvious notations, we set
$$[x,y]=\E_2(x,y)\in L_0,\,[1_{x},1_{y}]=1_{[x,y]}\in
L_1,\,[1_{x},{\bf g}]=\E_2(x,{\bf g})\in V_1\subset L_1.$$ Again
$[{\bf f},{\bf g}]\in L_1$ cannot be defined as $\E_2({\bf f},{\bf
g})\in V_2$. Instead, if we wish to build a 2-functor, we must set
$$[{\bf f},{\bf g}]=[1_{t{\bf f}},{\bf g}]=[{\bf f},1_{t{\bf
g}}]=\E_2(\E_1{\bf f},{\bf g})=\E_2({\bf f}, \E_1{\bf g})\in
V_1\subset L_1,$$ which is possible in view of Equation
(\ref{Chain2}), {\it if $\ell_2$ is on $V_1\times V_1$ valued in
2-cocycles} (and in particular if it vanishes on this subspace).
Further, we define
$$[1^2_x,1^2_y]=1^2_{[x,y]}\in L_2,\,[1^2_x,1_{{\bf
g}}]=1_{[1_x,{\bf g}]}\in L_2,\,[1^2_x,{\bf b}]=\E_2(x,{\bf b})\in
V_2\subset L_2,\,[1_{{\bf f}},1_{{\bf g}}]=1_{[{\bf f},{\bf
g}]}\in L_2.$$ Finally, we must set $$[1_{{\bf f}},{\bf
b}]=[1^2_{t{\bf f}},{\bf b}]=\E_2(\E_1{\bf f},{\bf b})=0,$$ which
is possible in view of Equation (\ref{Chain4}), {\it if $\ell_2$
vanishes on $V_1\times V_1$ when evaluated on a 1-coboundary} (and
especially if it vanishes on the whole subspace $V_1\times V_1$),
and
$$[{\bf a},{\bf b}]=[1_{t{\bf a}},{\bf b}]=[{\bf a},1_{t{\bf
b}}]=0,$$ which is possible.

It follows from these definitions that the bracket of $\za=(x,{\bf
f},{\bf a})=1^2_x+1_{{\bf f}}+{\bf a}\in L_2$ and $\zb=(y,{\bf
g},{\bf b})=1^2_y+1_{{\bf g}}+{\bf b}\in L_2$ is given by
{\be\label{BracketFromEll2}[\za,\zb]=(\E_2(x,y),\E_2(x,{\bf
g})+\E_2({\bf f},tg),\E_2(x,{\bf b})+\E_2({\bf a},y))\in
L_2,\ee}where $g=(y,{\bf g})$. The brackets of two elements of
$L_1$ or $L_0$ are obtained as special cases of the latter result.

We thus defined an antisymmetric bilinear map $[-,-]$ that assigns
an $i$-cell to any pair of $i$-cells, $i\in\{0,1,2\}$, and that
respects identities and sources. Moreover, since Equations
(\ref{FunComp1}) -- (\ref{FunComp3}) are satisfied, the map
$[-,-]$ respects compositions provided it respects targets. For
the last of the first three defined brackets, the target condition
is verified due to Equation (\ref{Chain1}). For the fourth
bracket, the target must coincide with $[t{\bf f},t{\bf
g}]=\E_2(\E_1{\bf f},\E_1{\bf g})$ and it actually coincides with
$t[{\bf f},{\bf g}]=\E_1\E_2(\E_1{\bf f},{\bf g})=\E_2(\E_1{\bf
f},\E_1{\bf g})$, again in view of (\ref{Chain1}). As regards the
seventh bracket, the target $t[1^2_x,{\bf b}]=\E_1\E_2(x,{\bf
b})=\E_2(x,\E_1{\bf b}),$ due to (\ref{Chain3}), must coincide
with $[1_x,t{\bf b}]=\E_2(x,\E_1{\bf b})$. The targets of the two
last brackets vanish and $[{\bf f},t{\bf b}]=\E_2({\bf
f},\E_1\E_1{\bf b})=0$ and $[t{\bf a},t{\bf b}]=\E_2(\E_1{\bf
a},\E_1\E_1{\bf b})=0.$\medskip

It is straightforwardly checked that the maps $\frak N$ and $\frak
G$ are inverses.\end{proof}

Note that ${\frak N}$ actually assigns to any antisymmetric
bilinear 2-functor a class of graded antisymmetric chain maps that
coincide outside $V_1\times V_1$ and whose restrictions to
$V_1\times V_1$ are valued in 2-cocycles and vanish when evaluated
on a 1-coboundary. The map $\frak N$, with values in chain maps,
is well-defined thanks to a canonical choice of a representative
of this class. Conversely, the values on $V_1\times V_1$ of the
considered chain map cannot be encrypted into the associated
2-functor, only the mentioned cohomological conditions are of
importance. Without the canonical choice, the map $\frak G$ would
not be injective.

\begin{rem} The categorical counterpart of $L_{\infty}$-condition $n=2$ is the functor condition on compositions.\end{rem}

\begin{rem} A 2-term Lie infinity algebra (resp. a Lie 2-algebra) can be viewed as a 3-term Lie infinity algebra (resp. a Lie 3-algebra). The preceding correspondence then of course reduces to the correspondence of \cite{BC04}.\end{rem}

\subsection{Jacobiator -- third structure map}

We suppose that we already constructed $(V,\ell_1,\ell_2)$ from
$(L,[-,-])$ or $(L,[-,-])$ from $(V,\ell_1,\ell_2)$.

\begin{lem}\label{Lem3} There exists a bijective correspondence between skew-symmetric trilinear natural 2-transformations $J:[[-,-],\bullet]\Rightarrow [[-,\bullet],-]+
[-,[-,\bullet]]$ and graded antisymmetric trilinear weight 1 maps
$\ell_3:V^{\times 3}\to V$ that verify $L_{\infty}$-condition
$n=3$ and vanish in total degree 1.\end{lem}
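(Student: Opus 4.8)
The plan is to establish the correspondence $J \leftrightarrow \ell_3$ by the same dévissage strategy used in Lemma \ref{Lem2}: a skew-symmetric trilinear natural 2-transformation $J$ between 2-functors is determined by its components $J_{xyz} \in L_1$ for $x,y,z \in L_0$, but the naturality condition (\ref{2-naturality}) forces many coherence constraints once we plug in 2-cells that are (all but one) identities. First I would, given $(L,[-,-])$ with a Jacobiator $J$, identify $L_m = \oplus_{i=0}^m V_i$ as in Remark \ref{BasicIdentification} and write $J_{xyz} = ((\text{source component}), \ell_3(x,y,z))$, where $\ell_3(x,y,z) \in V_1$ is by definition the $V_1$-component of $J_{xyz}$; the source 0-cell component is pinned down by the requirement that $J_{xyz}$ be a 1-cell from $[[x,y],z]$ to $[[x,z],y]+[x,[y,z]]$, i.e. $\ell_1 \ell_3(x,y,z) = \ell_2(\ell_2(x,y),z) - \ell_2(\ell_2(x,z),y) - \ell_2(x,\ell_2(y,z))$, which is exactly the $n=3$ $L_\infty$-condition in the all-degree-0 case. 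Skew-symmetry of $J$ in the categorical sense is, by the definition given just after Definition \ref{Lie3Alg}, precisely antisymmetry of the $V_1$-valued map $\ell_3$ on $V_0^{\times 3}$.

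Next I would extract the remaining components of $\ell_3$ — those with one or more arguments of positive degree — from the naturality square (\ref{2-naturality}) applied to $J$ viewed as a transformation between the trilinear 2-functors $[[-,-],\bullet]$ and $[[-,\bullet],-]+[-,[-,\bullet]]$. Feeding in triples $(\za,\zb,\zg)$ in which two of the three slots are identities $1_{-}$ or $1^2_{-}$ (the same reduction to "one non-identity argument at a time" invoked after Definition \ref{Lie3Alg}) expresses $\ell_3$ on $V_1 \times V_0 \times V_0$, on $V_2 \times V_0 \times V_0$, etc., in terms of $J_{xyz}$, $[-,-]$, and $\ell_1$; the weight-1 constraint forces $\ell_3$ to vanish on any triple of total degree $\geq 2$, and the bookkeeping shows that on triples of total degree exactly $1$ the naturality equations become purely cohomological (they say a certain $\ell_1$-image vanishes), so that $\ell_3$ can and must be chosen to vanish there — this is the analogue of the "$\ell_2$ vanishes on $V_1 \times V_1$" normalization, and it is what pins down the class "$\ell_3$ vanishes in total degree $1$" in the statement. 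One then checks that the full list of $n=3$ $L_\infty$-conditions (one equation per combination of argument degrees summing to $\le 1$) is equivalent to the collection of naturality constraints just derived.

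Conversely, starting from $(V,\ell_1,\ell_2,\ell_3)$ with $\ell_3$ of weight $1$, skew, satisfying $n=3$ and vanishing in total degree $1$, I would define $J_{xyz} \in L_1$ to have $V_0$-component $[[x,y],z]$ (the source) and $V_1$-component $\ell_3(x,y,z)$, and verify: (i) $J$ is a well-defined assignment $L_0 \to L_1$ with the correct source and target, using the degree-0 $n=3$ equation for the target; (ii) $J$ is trilinear and skew in the required sense; (iii) the naturality square (\ref{2-naturality}) holds, which by the reduction above amounts to checking it when all but one argument is an identity cell, and this follows from the mixed-degree $n=3$ equations together with the functoriality relations (\ref{FunComp1})--(\ref{FunComp3}) and chain relations (\ref{Chain1})--(\ref{Chain4}) already available. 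Finally I would confirm that $\frak N$ and $\frak G$ so defined are mutually inverse, exactly as at the end of Lemma \ref{Lem2}; as there, injectivity of $\frak G$ relies on the canonical choice $\ell_3 = 0$ in total degree $1$.

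The main obstacle I anticipate is the same subtlety that arose for $\ell_2$ on $V_1 \times V_1$: the component of $J$ on positive-degree arguments does not by itself force a unique $\ell_3$, only an $\ell_1$-cohomology class, and conversely the value of $\ell_3$ in total degree $1$ is invisible to the categorical data except through a coboundary condition. Getting the normalization statement ("vanish in total degree $1$") exactly right — i.e. identifying precisely which $n=3$ equations are automatically satisfied, which become cohomological, and which genuinely constrain $J_{xyz}$ on $V_0^{\times 3}$ — is the delicate part; the rest is the (lengthy but routine) enumeration of degree combinations and matching of the naturality squares against the split $L_\infty$-conditions.
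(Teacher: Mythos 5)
Your proposal follows essentially the same route as the paper: decompose the naturality square of $J$ into its $V_1$- and $V_2$-components, split the $n=3$ $L_\infty$-condition by argument degree, match the two lists, and observe that the values of $\ell_3$ in total degree 1 are invisible to $J$ except through cocycle/coboundary conditions, whence the normalization $\ell_3=0$ there. Two small slips to fix in a full write-up: the target condition should read $\ell_1\ell_3(x,y,z)=\ell_2(\ell_2(x,z),y)+\ell_2(x,\ell_2(y,z))-\ell_2(\ell_2(x,y),z)$ (target minus source, given $t(x,\mathbf{f})=x+\ell_1\mathbf{f}$), and the nontrivial instances of the $n=3$ condition occur in total degree $\le 2$, not $\le 1$ --- the two degree-2 cases must be treated, one matching the $V_2$-component of the naturality square and the other (arguments $x,\mathbf{f},\mathbf{g}$) holding automatically because $\ell_2$ vanishes on $V_1\times V_1$ and $\ell_3$ in total degree 1.
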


\begin{proof} A skew-symmetric trilinear natural
2-transformation $J:[[-,-],\bullet]\Rightarrow [[-,\bullet],-]+[-,[-,\bullet]]$
is a map that assigns to any $(x,y,z)\in L_0^{\times 3}$ a unique $J_{xyz}:[[x,y],z]\to [[x,z],y]+[x,[y,z]]$
in $L_1$, such that for any $\za=(z,{\bf f},{\bf a})\in L_2$, we have
$$[[1^2_x,1^2_y],\za]\circ_01_{J_{x,y,\,t^2\za}}=1_{J_{x,y,\,s^2\za}}\circ_0\left([[1^2_x,\za],1^2_y]+[1^2_x,[1^2_y,\za]]\right)$$
(as well as similar equations pertaining to naturality with respect
to the other two variables). A short computation shows that the
last condition decomposes into the following two requirements
on the $V_1$- and the $V_2$-component: \be\label{3a}{\bf
J}_{x,y,\,t{\bf  f}}+[1_{[x,y]},{\bf f}]=[[1_x,{\bf
f}],1_y]+[1_x,[1_y,{\bf f}]],\ee
\be\label{3b}[1^2_{[x,y]},{\bf a}]=[[1^2_x,{\bf a}],1^2_y]+[1^2_x,[1^2_y,{\bf a}]].\ee\smallskip

A graded antisymmetric trilinear weight $1$ map $\ell_3: V^{\times 3}\to V$ verifies $L_{\infty}$-condition $n=3$ if
\be\ell_1(\ell_3(u,v,w))+\ell_2(\ell_2(u,v),w)-(-1)^{vw}\ell_2(\ell_2(u,w),v)+(-1)^{u(v+w)}\ell_2(\ell_2(v,w),u)$$ $$+\ell_3(\ell_1(u),v,w)
-(-1)^{uv}\ell_3(\ell_1(v),u,w)+(-1)^{w(u+v)}\ell_3(\ell_1(w),u,v)=0,\label{LieInfty3}\ee for any homogeneous $u,v,w\in V$.
This condition is trivial for any arguments of total degree $d=u+v+w>2$.
For $d=0$, we write $(u,v,w)=(x,y,z)\in V_0^{\times 3}$, for $d=1$,
we consider $(u,v)=(x,y)\in V_0^{\times 2}$ and $w={\bf f}\in V_1$, for $d=2$,
either $(u,v)=(x,y)\in V_0^{\times 2}$ and $w={\bf a}\in V_2$, or
$u=x\in V_0$ and $(v,w)=({\bf f},{\bf g})\in V_1^{\times 2}$, so that Equation (\ref{LieInfty3}) reads
\be\label{31}\ell_1(\ell_3(x,y,z))+\ell_2(\ell_2(x,y),z)-\ell_2(\ell_2(x,z),y)+\ell_2(\ell_2(y,z),x)=0,\ee
\be\label{32}\ell_1(\ell_3(x,y,{\bf f}))+\ell_2(\ell_2(x,y),{\bf f})-\ell_2(\ell_2(x,{\bf f}),y)+\ell_2(\ell_2(y,{\bf f}),x)
+\ell_3(\ell_1({\bf f}),x,y)=0,\ee
\be\ell_2(\ell_2(x,y),{\bf a})-\ell_2(\ell_2(x,{\bf a}),y)+\ell_2(\ell_2(y,{\bf a}),x)
+\ell_3(\ell_1({\bf a}),x,y)=0,\label{33}\ee
\be\ell_2(\ell_2(x,{\bf f}),{\bf g})+\ell_2(\ell_2(x,{\bf g}),{\bf f})+\ell_2(\ell_2({\bf f},{\bf g}),x) -\ell_3(\ell_1({\bf f}),x,{\bf g})-\ell_3(\ell_1({\bf g}),x,{\bf f})=0.\label{34}\ee\smallskip

It is easy to associate to any such map $\ell_3$ a unique Jacobiator ${\frak G}(\ell_3)=J$: it suffices to set $J_{xyz}:=([[x,y],z],\ell_3(x,y,z))\in L_1$, for
any $x,y,z\in L_0$. Equation (\ref{31}) means that $J_{xyz}$ has the correct
target. Equations (\ref{3a}) and (\ref{3b}) exactly correspond to
Equations (\ref{32}) and (\ref{33}), respectively, {\it if we
assume that in total degree $d=1$, $\ell_3$ is valued
in 2-cocycles and vanishes when evaluated on a 1-coboundary}. These conditions are verified if we start from a structure map $\ell_3$ that vanishes on any arguments of total degree 1.

\begin{rem} Remark that the values $\ell_3(x,y,{\bf f})\in V_2$ cannot be encoded
in a natural 2-transformation $J:L_0^{\times 3}\ni (x,y,z)\to
J_{xyz}\in L_1$ (and that the same holds true for Equation (\ref{34}), whose first three terms are zero, since we started from a map $\ell_2$ that vanishes on $V_1\times
V_1$).\end{rem}

{\it Proof of Lemma \ref{Lem3} (continuation)}. Conversely, to any Jacobiator $J$ corresponds a unique map ${\frak N}(J)=\ell_3$. Just set $\ell_3(x,y,z):={\bf J}_{xyz}\in V_1$ and $\ell_3(x,y,{\bf f})=0$, for all $x,y,z\in V_0$ and ${\bf f}\in V_1$ (as $\ell_3$ is required to have weight 1, it must vanish if evaluated on elements of degree $d\ge 2$).\smallskip

Obviously the composites $\frak{NG}$ and $\frak{GN}$ are identity maps.\end{proof}

\begin{rem} The naturality condition is, roughly speaking, the categorical analogue of the $L_{\infty}$-condition $n=3$.\end{rem}

\subsection{Identiator -- fourth structure map}

For $x,y,z,u\in L_0$, we set
\be\label{Eta}\zh_{xyzu}:=[J_{x,y,z},1_u]\circ_0(J_{[x,z],y,u}+J_{x,[y,z],u})\circ_0([J_{xzu},1_y]+1)\circ_0([1_x,J_{yzu}]+1)\in L_1\ee and
\be\label{Epsilon}\ze_{xyzu}:=
J_{[x,y],z,u}\circ_0([J_{xyu},1_z]+1)\circ_0(J_{x,[y,u],z}+J_{[x,u],y,z}+J_{x,y,[z,u]})\in L_1,\ee see Definition \ref{Lie3Alg}. The identities $1$ are uniquely determined by the sources of the involved factors. The quadrilinear natural 2-transformations $\zh$ and $\ze$ are actually the left and right hand composites of the Baez-Crans octagon that pictures the coherence law of a Lie 2-algebra, see \cite{BC04}, Definition 4.1.3. They connect the quadrilinear 2-functors $F,G:L\times L\times L\times L\to L$, whose values at $(x,y,z,u)$ are given by the source and the target of the 1-cells $\zh_{xyzu}$ and $\ze_{xyzu}$, as well as by the top and bottom sums of triple brackets of the mentioned octagon.

\begin{lem} The skew-symmetric quadrilinear 2-modifications $\zm:\zh\Rrightarrow \ze$ are in 1-to-1 correspondence with the graded antisymmetric quadrilinear weight 2 maps $\ell_4:V^{\times 4}\to V$ that verify the $L_{\infty}$-condition $n=4$.\end{lem}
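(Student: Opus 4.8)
The plan is to follow the template established by Lemmas 1--3, which realise each $L_\infty$-condition as the categorical analogue of one layer of structure. Here the goal is to match skew-symmetric quadrilinear $2$-modifications $\zm:\zh\Rrightarrow\ze$ with graded antisymmetric quadrilinear weight-$2$ maps $\ell_4:V^{\times 4}\to V$ satisfying the $L_\infty$-condition $n=4$. Since $\ell_4$ has weight $2$ and $V$ is concentrated in degrees $0,1,2$, the map $\ell_4$ can only be nonzero on $V_0^{\times 4}$, where it takes values in $V_2$; thus $\ell_4$ is a single skew-symmetric $4$-linear map $V_0^{\times 4}\to V_2$, with no lower-degree components to track. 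On the categorical side, by Definition \ref{Lie3Alg} a $2$-modification $\zm_{xyzu}:\zh_{xyzu}\Rrightarrow\ze_{xyzu}$ assigns to each $(x,y,z,u)\in L_0^{\times 4}$ a $2$-cell in $L_2$; identifying $L_2$ with $V_0\oplus V_1\oplus V_2$ as in Remark \ref{BasicIdentification}, the source forces the $V_0$- and $V_1$-components to be the ones already determined by $\zh_{xyzu}$, so the only free datum is the $V_2$-component, which is required to be antisymmetric. Hence a $2$-modification is the same as a skew-symmetric map $L_0^{\times 4}\to V_2$, and the correspondence $\ell_4(x,y,z,u)\leftrightarrow$ ($V_2$-component of $\zm_{xyzu}$) is the obvious candidate bijection; the substance is to check that the modification axiom is equivalent to $L_\infty$-condition $n=4$.

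First I would write out $L_\infty$-condition $n=4$ explicitly for homogeneous arguments and observe, as in the $n=3$ case, that it is trivial in total degree $d>2$ and, because all terms land in $V_{d+2}$ with $d\in\{0,1,2\}$, the only surviving instance of interest for the genuinely new datum is $d=0$, i.e. arguments $x,y,z,u\in V_0$; the $d=1,2$ instances involve only $\ell_1,\ell_2,\ell_3$ (and at most the $V_2$-values of $\ell_3$, which vanish by Lemma \ref{Lem3}), so they impose consistency conditions already guaranteed by the earlier lemmas rather than conditions on $\ell_4$ itself. The $d=0$ equation expresses $\ell_1\ell_4(x,y,z,u)$ as an alternating sum of $\ell_2(\ell_3(\cdot,\cdot,\cdot),\cdot)$ and $\ell_3(\ell_2(\cdot,\cdot),\cdot,\cdot)$ terms over the appropriate shuffles — this is precisely the Baez--Crans ``$\ell_3$-coherence'' identity. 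I would then unpack the modification property \eqref{2-modification}, $F(\za)\circ_0\zm_b=\zm_a\circ_0 G(\za)$, for $F,G$ the quadrilinear $2$-functors described after \eqref{Epsilon}, taking $\za$ running over the $2$-cells of each of the four slots, and split it into its $V_1$- and $V_2$-components exactly as was done for \eqref{3a}--\eqref{3b}. The $V_1$-component will reproduce identities already known from Lemma \ref{Lem3} (the naturality of $\zh$ and $\ze$), while the $V_2$-component, evaluated on $2$-cells of the form $(u,{\bf f},{\bf a})$, will read off on the one hand the target-matching condition for $\zm$ — namely that the $V_1$-component of $\zh_{xyzu}$ (equivalently $\ze_{xyzu}$) equals $\ell_1\ell_4(x,y,z,u)$ plus the explicit sum of triple-bracket contributions — and on the other hand the vanishing of $\ell_4$-type expressions on coboundaries, automatic here since $\ell_4$ has no component below $V_0^{\times 4}$.

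The construction of the two maps then goes as follows. Given $\ell_4$ satisfying $n=4$, define $\zm_{xyzu}\in L_2$ to have $V_0$- and $V_1$-components equal to those of the source $\zh_{xyzu}$ and $V_2$-component equal to $\ell_4(x,y,z,u)$; the $d=0$ instance of the $L_\infty$-identity is exactly the statement that this $\zm_{xyzu}$ has the correct target $\ze_{xyzu}$, and the $d=1,2$ instances (together with Lemmas \ref{Lem2}, \ref{Lem3}) guarantee that the remaining modification equations hold, so $\zm$ is a genuine $2$-modification; skew-symmetry of $\zm$ in the sense of Definition \ref{Lie3Alg} is literally antisymmetry of $\ell_4$. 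Conversely, from a $2$-modification $\zm$ set $\ell_4(x,y,z,u)$ to be the $V_2$-component of $\zm_{xyzu}$ (and $\ell_4=0$ on arguments of total degree $\ge 1$, forced by weight); the target condition for $\zm$ is the $d=0$ $L_\infty$-equation, and the higher-degree $L_\infty$-equations follow from the earlier structure. The two assignments are visibly inverse to each other.

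The main obstacle I anticipate is bookkeeping rather than conceptual: one must correctly enumerate the terms of the $n=4$ identity (there are many shuffle terms, with signs $\chi(\zs)(-1)^{i(j-1)}$) and match them termwise against the eight legs of the Baez--Crans octagon that define $\zh$ and $\ze$ in \eqref{Eta}--\eqref{Epsilon}, keeping the composition-order conventions straight ($\circ_0$-composites, ``the cell that acts first on the left''). A secondary subtlety, as in Lemma \ref{Lem2}, is that $\frak N$ a priori sends a $2$-modification to a class of maps agreeing on $V_0^{\times 4}$ and constrained only cohomologically in higher degree; but since weight $2$ forces $\ell_4$ to vanish identically outside $V_0^{\times 4}$, this ambiguity collapses and no canonical-choice argument is needed here. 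I would also verify once and for all that the coherence law \eqref{CohLaw0} plays no role in this particular lemma — it constrains which $\zm$ are admissible as \emph{Identiators} in a Lie $3$-algebra, not the bijection between $2$-modifications and $\ell_4$'s — deferring its translation to a later lemma of the proof of Theorem \ref{MainTheo}.
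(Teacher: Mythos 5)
Your reduction of the data on both sides to a single skew-symmetric map $V_0^{\times 4}\to V_2$ is fine, but the proposal goes wrong where you declare the $d=1$ instance of $L_\infty$-condition $n=4$ vacuous and, correspondingly, the modification axiom contentless beyond source/target matching. Each composite $\ell_i\ell_j$ with $i+j=5$ has weight $(i-2)+(j-2)=1$, so on arguments of total degree $d$ the identity lands in $V_{d+1}$, not $V_{d+2}$: the case $d=1$, with arguments $(x,y,z,{\bf f})\in V_0^{\times 3}\times V_1$, therefore lives in $V_2$ and is not trivial. After the $\ell_2\ell_3$- and $\ell_3\ell_2$-terms are killed by the standing hypotheses (vanishing of $\ell_2$ on $V_1\times V_1$ and of $\ell_3$ in total degree $1$), what survives is $\ell_4(\ell_1({\bf f}),x,y,z)=0$. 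This is a genuine constraint on $\ell_4$: a $1$-coboundary $\ell_1({\bf f})$ is an element of $V_0$, not an element of positive degree, so this is an evaluation of $\ell_4$ on $V_0^{\times 4}$ and is not forced by the weight count --- your phrase ``automatic since $\ell_4$ has no component below $V_0^{\times 4}$'' conflates ``coboundary'' with ``argument of positive degree''.

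The same omission occurs on the categorical side. A $2$-modification is not an arbitrary skew map $L_0^{\times 4}\to V_2$ with the right source and target: the axiom $F(\za)\circ_0\zm_b=\zm_a\circ_0G(\za)$, written in components for $\za=(u,{\bf f},{\bf a})$, gives on the $V_2$-level
$$F(1^2_x,1^2_y,1^2_z,{\bf a})+{\bf m}_{x,y,z,u+t{\bf f}}={\bf m}_{xyzu}+G(1^2_x,1^2_y,1^2_z,{\bf a});$$
since $2$-naturality of $\zh$ already forces $F(1^2_x,1^2_y,1^2_z,{\bf a})=G(1^2_x,1^2_y,1^2_z,{\bf a})$, this reduces by quadrilinearity to ${\bf m}_{x,y,z,\,t{\bf f}}=0$, i.e. the $V_2$-component must kill $1$-coboundaries. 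The heart of the lemma --- which your argument skips --- is that this surviving piece of the modification axiom is exactly the $d=1$ equation $\ell_4(\ell_1({\bf f}),x,y,z)=0$, while the target condition $t\zm_{xyzu}=\ze_{xyzu}$ is the $d=0$ equation; without this identification the proof only matches two supersets of the sets named in the statement. A minor further point: with the paper's sign conventions the $d=0$ equation reads $\ell_1(\ell_4(x,y,z,u))-{\bf h}_{xyzu}+{\bf e}_{xyzu}=0$, so the $V_2$-component of $\zm_{xyzu}$ must be taken to be $-\ell_4(x,y,z,u)$ rather than $+\ell_4(x,y,z,u)$ for the target to come out as $\ze_{xyzu}$.
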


\begin{proof}  A skew-symmetric quadrilinear 2-modification
$\zm:\zh\Rrightarrow \ze$ maps every tuple $(x,y,z,u)\in
L_0^{\times 4}$ to a unique $\zm_{xyzu}:\zh_{xyzu}\Rightarrow
\ze_{xyzu}$ in $L_2$, such that, for any
$\za=(u,{\bf f},{\bf a})\in L_2$, we have
\be F(1^2_x,1^2_y,1^2_z,\za)\circ_0\zm_{x,y,z,u+t{\bf f}}=\zm_{xyzu}\circ_0G(1^2_x,1^2_y,1^2_z,\za)\label{ModCond}\ee (as well as similar
results concerning naturality with respect to the three other
variables). If we decompose $\zm_{xyzu}\in L_2=V_0\oplus V_1\oplus V_2$,
$$\zm_{xyzu}=(F(x,y,z,u),{\bf h}_{xyzu},{\bf m}_{xyzu})=1_{\zh_{xyzu}}+{\bf m}_{xyzu},$$ Condition (\ref{ModCond}) reads
\be\label{41}F(1_x,1_y,1_z,{\bf f})+{\bf h}_{x,y,z,u+t{\bf f}}={\bf h}_{xyzu}+G(1_x,1_y,1_z,{\bf f}),\ee
\be\label{42}F(1^2_x,1^2_y,1^2_z,{\bf a})+{\bf m}_{x,y,z,u+t{\bf f}}={\bf m}_{xyzu}+G(1^2_x,1^2_y,1^2_z,{\bf a}).\ee\smallskip

On the other hand, a graded antisymmetric quadrilinear weight 2 map $\ell_4:V^{\times 4}\to V$, and more
precisely $\ell_4:V_0^{\times 4}\to V_2$, verifies $L_{\infty}$-condition $n=4$, if
$$\ell_1(\ell_4(a,b,c,d))$$
$$-\ell_2(\ell_3(a,b,c),d)+(-1)^{cd}\ell_2(\ell_3(a,b,d),c)-(-1)^{b(c+d)}\ell_2(\ell_3(a,c,d),b)$$
$$+(-1)^{a(b+c+d)}\ell_2(\ell_3(b,c,d),a)+
\ell_3(\ell_2(a,b),c,d)-(-1)^{bc}\ell_3(\ell_2(a,c),b,d)$$
$$+(-1)^{d(b+c)}\ell_3(\ell_2(a,d),b,c)+(-1)^{a(b+c)}\ell_3(\ell_2(b,c),a,d)$$
$$-(-1)^{ab+ad+cd}\ell_3(\ell_2(b,d),a,c)+(-1)^{(a+b)(c+d)}\ell_3(\ell_2(c,d),a,b)$$
$$-\ell_4(\ell_1(a),b,c,d)+(-1)^{ab}\ell_4(\ell_1(b),a,c,d)$$
\be\label{LieInfty4}-(-1)^{c(a+b)}\ell_4(\ell_1(c),a,b,d)+(-1)^{d(a+b+c)}\ell_4(\ell_1(d),a,b,c)=0,\ee for all homogeneous $a,b,c,d\in V$.
The condition is trivial for $d\ge 2$. For $d=0$, we write
$(a,b,c,d)=(x,y,z,u)\in V_0^{\times 4}$, and, for $d=1$, we take
$(a,b,c,d)=(x,y,z,{\bf f})\in V_0^{\times 3}\times V_1$, so that -- since $\ell_2$ and $\ell_3$ vanish on $V_1\times
V_1$ and for $d=1$, respectively -- Condition (\ref{LieInfty4}) reads
\be\label{4a}\ell_1(\ell_4(x,y,z,u))-{\bf h}_{xyzu}+{\bf e}_{xyzu}=0,\ee
\be\label{4b} \ell_4(\ell_1({\bf f}),x,y,z)=0,\ee where
${\bf h}_{xyzu}$ and ${\bf e}_{xyzu}$ are the $V_1$-components of $\zh_{xyzu}$ and $\ze_{xyzu}$, see Equations (\ref{Eta}) and (\ref{Epsilon}).\medskip

We can associate to any such map $\ell_4$ a unique 2-modification ${\frak G}(\ell_4)=\zm$, $\zm:\zh\Rrightarrow \ze$. It suffices to set, for $x,y,z,u\in
L_0$, $$\zm_{xyzu}=(F(x,y,z,u),{\bf h}_{xyzu},-\ell_4(x,y,z,u))\in
L_2.$$ In view of Equation (\ref{4a}), the target of this 2-cell is
$$t\zm_{xyzu}=(F(x,y,z,u),{\bf h}_{xyzu}-\ell_1(\ell_4(x,y,z,u)))=\ze_{xyzu}\in L_1.$$ Note now that the
2-naturality equations (\ref{3a}) and (\ref{3b}) show that 2-naturality of $\zh:F\Rightarrow G$ means that
$$F(1_x,1_y,1_z,{\bf f})+{\bf h}_{x,y,z,u+t{\bf f}}={\bf h}_{xyzu}+G(1_x,1_y,1_z,{\bf f}),$$
$$F(1^2_x,1^2_y,1^2_z,{\bf a})=G(1^2_x,1^2_y,1^2_z,{\bf a}).$$
When comparing with Equations (\ref{41}) and (\ref{42}), we conclude
that $\zm$ is a 2-modification if and only if $\ell_4(\ell_1({\bf f}),x,y,z)=0,$ which is exactly Equation (\ref{4b}).\medskip

Conversely, if we are given a skew-symmetric quadrilinear 2-modification
$\zm:\zh\Rrightarrow\ze$, we
define a map ${\frak N}(\zm)=\ell_4$ by setting
$\ell_4(x,y,z,u)=-{\bf m}_{xyzu}$, with self-explaining notations. $L_{\infty}$-condition $n=4$ is equivalent with Equations
(\ref{4a}) and (\ref{4b}). The first means that $\zm_{xyzu}$ must
have the target $\ze_{xyzu}$ and the second requires that ${\bf m}_{t{\bf f},x,y,z}$ vanish -- a consequence of the 2-naturality of
$\zh$ and of Equation (\ref{42}).\medskip

The maps $\frak{N}$ and $\frak{G}$ are again inverses.\end{proof}

\subsection{Coherence law -- $L_{\infty}$-condition $n=5$}

\begin{lem} Coherence law (\ref{CohLaw0}) is equivalent to $L_{\infty}$-condition $n=5$.\end{lem}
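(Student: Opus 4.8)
The plan is to make both sides of the asserted equivalence completely explicit and to recognise them as one and the same identity, obtained by reading off the $V_2$-component of the coherence law. On the homotopy side I would first reduce $L_{\infty}$-condition $n=5$. Since $V$ is 3-term, $\ell_5$ is a map of weight $3$ on $V_0\oplus V_1\oplus V_2$ and therefore vanishes identically, so the $n=5$ instance of (\ref{LieInftyCond}) involves only $\ell_1,\ell_2,\ell_3,\ell_4$ and its summands are of the three shapes $\ell_2(\ell_4(-,-,-,-),-)$, $\ell_3(\ell_3(-,-,-),-,-)$ and $\ell_4(\ell_2(-,-),-,-,-)$. A degree count shows the condition is trivial unless all five arguments lie in $V_0$ — every term has total weight $2$, so any argument in $V_1$ or $V_2$ forces it into degree $\ge 3$, hence to vanish — in which case it is an equation valued in $V_2$. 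But then the inner value $\ell_3(x,y,z)$ lies in $V_1$, so each term $\ell_3(\ell_3(x,y,z),u,v)$ is $\ell_3$ evaluated on a triple of total degree $1$ and vanishes by hypothesis. Thus $L_{\infty}$-condition $n=5$ amounts to a single $V_2$-valued identity in $x,y,z,u,v\in V_0$, built — with the shuffle and Koszul signs of (\ref{LieInftyCond}) — only from terms $\ell_2(\ell_4(\ldots),\ldots)$ and $\ell_4(\ell_2(\ldots),\ldots)$.

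On the categorical side, recall from Definitions \ref{Alpha1} -- \ref{Alpha4} that $\alpha_1,\dots,\alpha_4$ are $2$-cells of $L$ depending on $x,y,z,u,v\in L_0$, built as $\circ_0$- and $\circ_1$-composites of whiskered Jacobiators ($1_J$, $[J,1_{\bullet}]$, $[1_{\bullet},J]$) and whiskered Identiators ($\mu$, $[\mu,1_{\bullet}]$, $[1_{\bullet},\mu]$), where — in the manner of the bracketed entries of (\ref{Eta}) and (\ref{Epsilon}) — the $\mu$'s occur either with one slot filled by a bracket of two of the free variables (e.g. $\mu_{[x,y],z,u,v}$, $\mu_{x,[y,z],u,v},\ldots$) or with the remaining variable whiskered in by a bracket ($[\mu_{xyzu},1_v]$, $[1_x,\mu_{yzuv}],\ldots$). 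Since (\ref{CohLaw0}) is, by construction, an equality of $2$-cells of $L$ having the same $1$-source and the same $1$-target, and since two such $2$-cells $(w_0,w_1,w_2)$ and $(w_0,w_1,w_2')$ are equal iff $w_2=w_2'$, the coherence law is equivalent to the equality of the $V_2$-components of its two sides; and, by the composition and identity rules of Proposition \ref{UniqueCatStr}, the $V_2$-component of a $\circ_1$-composite is the sum of the $V_2$-components of its factors while the $\circ_1$-inverse negates the $V_2$-component, so (\ref{CohLaw0}) reads $(\alpha_1)_2-(\alpha_4)_2=(\alpha_3)_2-(\alpha_2)_2$ in $V_2$.

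It then remains to compute each $(\alpha_i)_2$ and to compare. Here I would use the explicit forms $J_{xyz}=([[x,y],z],\ell_3(x,y,z))\in L_1$ and $\mu_{xyzu}=(F(x,y,z,u),{\bf h}_{xyzu},-\ell_4(x,y,z,u))\in L_2$ from the proofs of Lemma \ref{Lem3} and of the previous lemma; the bracket formula (\ref{BracketFromEll2}) and its specialisations, which show that bracketing a cell with an identity acts on $V_2$-components through $\ell_2(-,\bullet)$ or $\ell_2(\bullet,-)$; and the observation that a cell built solely from Jacobiators has third component $0$, hence contributes nothing to $V_2$. It follows that $(\alpha_i)_2$ is a signed sum of terms $-\ell_4(\ldots)$ (from the $\mu$'s), $\ell_2(-\ell_4(\ldots),\bullet)$ and $\ell_2(\bullet,-\ell_4(\ldots))$ (from the bracket-whiskered $\mu$'s) and $-\ell_4(\ell_2(-,-),\ldots)$ (from the $\mu$'s with a bracketed slot); assembling the four contributions and matching them, term by term, against the reduced $n=5$ identity of the first step, one obtains that $(\alpha_1)_2-(\alpha_4)_2-(\alpha_3)_2+(\alpha_2)_2$ equals, up to an overall sign, the left-hand side of (\ref{LieInftyCond}) for $n=5$ and all $x,y,z,u,v$, which yields the equivalence in both directions. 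The hard part will be precisely this bookkeeping: reconciling the Koszul and shuffle signs of (\ref{LieInftyCond}) with the orientations and the $\circ_1$-inverses carried by the coherence polytope (whose $2$-faces are Baez--Crans octagons filled by Identiators), and checking that the comparison is bijective on terms, so that each $\ell_2(\ell_4(\ldots),\ldots)$ and each $\ell_4(\ell_2(\ldots),\ldots)$ summand of the $n=5$ identity is produced once, with the correct sign. The rest — the two reductions above — is routine once the sign conventions are fixed.
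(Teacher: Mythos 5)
Your overall strategy is the paper's own: reduce the $n=5$ condition to a $V_2$-valued identity on $V_0^{\times 5}$ (your reduction, including the vanishing of $\ell_5$ and of the $\ell_3(\ell_3(\cdot),\cdot,\cdot)$ terms, is correct and matches the paper), and identify it with the $V_2$-component of $\alpha_1+\alpha_4^{-1}=\alpha_3+\alpha_2^{-1}$ computed from $J_{xyz}=([[x,y],z],\ell_3(x,y,z))$, $\mu_{xyzu}=(F(x,y,z,u),{\bf h}_{xyzu},-\ell_4(x,y,z,u))$ and the bracket formula (\ref{BracketFromEll2}). But there is a genuine gap in the step where you declare that the two sides of (\ref{CohLaw0}) ``by construction'' have the same $1$-source and $1$-target, so that the coherence law is equivalent to the equality of $V_2$-components alone. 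That agreement is not automatic. The coherence law is an equation in $L_2=V_0\oplus V_1\oplus V_2$, and its $V_1$-component reads ${\bf s}_1+{\bf t}_4={\bf s}_3+{\bf t}_2$, where each side is a sum of roughly $25$--$29$ terms of the shapes $\ell_3\ell_2\ell_2$, $\ell_2\ell_3\ell_2$, $\ell_2\ell_2\ell_3$ coming from the whiskered Jacobiators and from the $V_1$-components ${\bf h}$, ${\bf e}$ of $\eta$ and $\varepsilon$. The paper's proof devotes most of its length to checking that this $V_1$-identity holds: after pairwise cancellations and direct matchings, seven residual six-term sums remain, and each vanishes only by invoking the $n=3$ condition (\ref{32}) together with the standing hypothesis that $\ell_3$ vanishes in total degree $1$ (via $\ell_1\ell_3(x,y,z)=t{\bf J}_{xyz}$). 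Without this verification, the implication ``$L_\infty$-condition $n=5$ $\Rightarrow$ coherence law'' is not established, since the coherence law would then assert strictly more than the $V_2$-equality.

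So your proposal needs one additional (and in fact the most laborious) step: prove that the $V_0$- and $V_1$-components of $\alpha_1+\alpha_4^{-1}$ and $\alpha_3+\alpha_2^{-1}$ coincide identically, given the already-established structure $(\ell_1,\ell_2,\ell_3)$ and its degree restrictions. The $V_0$-part is immediate (all four cells sit over $F([x,y],z,u,v)$), but the $V_1$-part is a nontrivial consequence of (\ref{32}), not a formality. The remainder of your outline — the sign bookkeeping identifying ${\bf a}_1-{\bf a}_4={\bf a}_3-{\bf a}_2$ with (\ref{51}) — is correct in spirit and is exactly what the paper carries out.
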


\begin{proof} The sh Lie condition $n=5$ reads,
$$\ell_2(\ell_4(x,y,z,u),v)-\ell_2(\ell_4(x,y,z,v),u)+\ell_2(\ell_4(x,y,u,v),z)-\ell_2(\ell_4(x,z,u,v),y)+\ell_2(\ell_4(y,z,u,v),x)$$
$$+\ell_4(\ell_2(x,y),z,u,v)-\ell_4(\ell_2(x,z),y,u,v)+\ell_4(\ell_2(x,u),y,z,v)-\ell_4(\ell_2(x,v)y,z,u)+\ell_4(\ell_2(y,z),x,u,v)$$
$$-\ell_4(\ell_2(y,u),x,z,v)+\ell_4(\ell_2(y,v),x,z,u)+\ell_4(\ell_2(z,u),x,y,v)-\ell_4(\ell_2(z,v),x,y,u)+\ell_4(\ell_2(u,v),x,y,z)$$
\be=0,\label{51}\ee for any $x,y,z,u,v\in V_0$. It is trivial in degree $d\ge 1$. Let us mention that it follows from Equation (\ref{31}) that $(V_0,\ell_2)$ is
a Lie algebra up to homotopy, and from Equation (\ref{33}) that $\ell_2$ is a
representation of $V_0$ on $V_2$. Condition (\ref{51}) then requires
that $\ell_4$ be a Lie algebra 4-cocycle of $V_0$ represented upon
$V_2$.\medskip

The coherence law for the 2-modification $\zm$ corresponds to four different ways
to rebracket the expression $F([x,y],z,u,v)=[[[[x,y],z],u],v]$ by
means of $\zm$, $J$, and $[-,-]$. More precisely, we define, for any tuple
$(x,y,z,u,v)\in L_0^{\times 5}$, four 2-cells
$$\za_i:\zs_i\Rightarrow \zt_i,$$ $i\in\{1,2,3,4\}$, in $L_2$, where
$\zs_i,\zt_i:A_i\to B_i$. Dependence on the considered tuple is
understood. We omit temporarily also index $i$. Of course, $\zs$ and $\zt$ read $\zs=(A,{\bf s})\in L_1$ and $\zt=(A,{\bf t})\in L_1$. $$\mbox{If }\,\za=(A,{\bf s},{\bf a})\in L_2,\,\mbox{ we set }\,\za^{-1}=(A,{\bf t},-{\bf a})\in L_2,$$ which is,
as easily seen, the inverse of $\za$ for composition along
1-cells.

\begin{defi} The {\bf coherence law} for the 2-modification $\zm$ of a Lie 3-algebra $(L,[-,-],J,\zm)$ reads
\be\label{CohLaw}\za_1+\za_4^{-1}=\za_3+\za_2^{-1},\ee where $\za_1$ -- $\za_4$ are detailed in the next definitions.\end{defi}

\begin{defi}\label{Alpha1} The {\bf first 2-cell} $\za_1$ is given by \be\label{CohLaw11} \za_1=
1_{11}\circ_0\left(\zm_{x,y,z,[u,v]}+[\zm_{xyzv},1^2_u]\right)\circ_01_{12}\circ_0\left(\zm_{[x,v],y,z,u}+\zm_{x,[y,v],z,u}+\zm_{x,y,[z,v],u}+1^2\right),\ee
where \be\label{CohLaw12}
1_{11}=1_{J_{[[x,y],z],u,v}},1_{12}=1_{[J_{x,[z,v],y},1_{u}]+[J_{[x,v],z,y},1_{u}]+[J_{x,z,[y,v]},1_{u}]}+1^2,\ee
and where the $1^2$ are the identity 2-cells associated with
the elements of $L_0$ provided by the composability
condition.\end{defi}

For instance, the squared target of the second factor of $\za_1$
is $G(x,y,z,[u,v])+[G(x,y,z,v),u]$, whereas the squared source of
the third factor is
$$[[[x,[z,v]],y],{u}]+[[[[x,v],z],y],{u}]+[[[x,z],[y,v]],{u}]+\ldots.$$
As the three first terms of this sum are three of the six terms of
$[G(x,y,z,v),u]$, the object ``$\ldots$'', at which $1^2$ in $1_{12}$ is evaluated,
is the sum of the remaining terms and $G(x,y,z,[u,v]).$

\begin{defi}\label{Alpha2} The {\bf fourth 2-cell} $\za_4$ is equal to
\be\label{CohLaw21}\za_4=[\zm_{xyzu},1^2_v]\circ_01_{41}\circ_0\left(\zm_{[x,u],y,z,v}+\zm_{x,[y,u],z,v}+\zm_{x,y,[z,u],v}\right)\circ_01_{42},\ee
where $$
1_{41}=1_{[J_{[x,u],z,y},1_v]+[J_{x,z,[y,u]},1_v]+[J_{x,[z,u],y},1_v]}+1^2,$$
\be\label{CohLaw22}
1_{42}=1_{[[J_{xuv},1_z],1_y]+[J_{xuv},1_{[y,z]}]+[1_x,[J_{yuv},1_z]]+[[1_x,J_{zuv}],1_y]+[1_x,[1_y,J_{zuv}]]+[1_{[x,z]},J_{yuv}]}+1^2.\ee\end{defi}

\begin{defi}\label{Alpha3} The {\bf third 2-cell} $\za_3$ reads \be\label{CohLaw31}
\za_3=\zm_{[x,y],z,u,v}\circ_01_{31}\circ_0\left([\zm_{xyuv},1^2_z]+1^2\right)\circ_01_{32}\circ_01_{33},\ee
where $$ 1_{31}=1_{[J_{[x,y],v,u},1_z]}+1^2,$$
$$1_{32}=1_{[J_{xyv},1_{[z,u]}]+J_{x,y,[[z,v],u]}+J_{x,y,[z,[u,v]]}+J_{[[x,v],u],y,z}+J_{[x,v],[y,u],z}+J_{[x,u],[y,v],z}+
J_{x,[[y,v],u],z}+J_{[x,[u,v]],y,z}+J_{x,[y,[u,v]],z}+[J_{xyu},1_{[z,v]}]},$$
\be\label{CohLaw32}
1_{33}=1_{J_{x,[y,v],[z,u]}+J_{[x,v],y,[z,u]}+J_{x,[y,u],[z,v]}+J_{[x,u],y,[z,v]}}+1^2.\ee\end{defi}

\begin{defi}\label{Alpha4} The {\bf second 2-cell} $\za_2$ is defined as \be\label{CohLaw41}
\za_2=1_{21}\circ_0\left(\zm_{[x,z],y,u,v}+\zm_{x,[y,z],u,v}\right)\circ_01_{22}\circ_0\left([1^2_x,\zm_{yzuv}]+[\zm_{xzuv},1^2_y]+1^2\right)\circ_01_{23},\ee
where \be\label{CohLaw42} 1_{21}=1_{[[J_{xyz},1_u],1_v]},
1_{22}=1_{[1_x,J_{[y,z],v,u}]+[J_{[x,z],v,u},1_y]}+1^2,
1_{23}=1_{[J_{xzv},1_{[y,u]}]+[J_{xzu},1_{[y,v]}]+[1_{[x,v]},J_{yzu}]+[1_{[x,u]},J_{yzv}]}+1^2.\ee\end{defi}

To get the component expression
\be\label{CohLawComp}(A_1+A_4,{\bf s}_1+{\bf t}_4,{\bf a}_1-{\bf a}_4)=(A_3+A_2,{\bf s}_3+{\bf t}_2,{\bf a}_3-{\bf a}_2)\ee of the coherence law (\ref{CohLaw}), we now comment on the computation of the components
$(A_i,{\bf s}_i,{\bf a}_i)$ (resp. $(A_i,{\bf t}_i,-{\bf a}_i)$) of $\za_i$ (resp. $\za_i^{-1}$).\medskip

As concerns $\za_1$, it is straightforwardly seen that all compositions make sense, that its
$V_0$-component is $$A_1=F([x,y],z,u,v),$$ and that the
$V_2$-component is
$${\bf a}_1=$$
$$-\ell_4(x,y,z,\ell_2(u,v))-\ell_2(\ell_4(x,y,z,v),u)-\ell_4(\ell_2(x,v),y,z,u)-\ell_4(x,\ell_2(y,v),z,u)-\ell_4(x,y,\ell_2(z,v),u).$$
When actually examining the composability conditions, we find that
$1^2$ in the fourth factor of $\za_1$ is $1^2_{G(x,y,z,[u,v])}$
and thus that the target $t^2\za_1$ is made up by the 24 terms
$$G([x,v],y,z,u)+G(x,[y,v],z,u)+G(x,y,[z,v],u)+G(x,y,z,[u,v]).$$
The computation of the $V_1$-component ${\bf s}_1$ is tedious but
simple -- it leads to a sum of 29 terms of the type
``$\ell_3\ell_2\ell_2,$ $\ell_2\ell_3\ell_2$, or
$\ell_2\ell_2\ell_3$''. We will comment on it in the case of
$\za_4^{-1}$, which is slightly more interesting.\medskip

The $V_0$-component of $\za_4^{-1}$ is
$$A_4=[F_{xyzu},v]=F([x,y],z,u,v)$$ and its $V_2$-component is
equal to
$$-{\bf a}_4=\ell_2(\ell_4(x,y,z,u),v)+\ell_4(\ell_2(x,u),y,z,v)+\ell_4(x,\ell_2(y,u),z,v)+\ell_4(x,y,\ell_2(z,u),v).$$
The $V_1$-component ${\bf t}_4$ of $\za_4^{-1}$ is the
$V_1$-component of the target of $\za_4$. This target is the
composition of the targets of the four factors of $\za_4$ and its
$V_1$-component is given by
$${\bf t}_4=[{\bf e}_{xyzu},1_v]+[{\bf J}_{[x,u],z,y},1_v]+[{\bf J}_{x,z,[y,u]},1_v]+[{\bf J}_{x,[z,u],y},1_v]+{\bf e}_{[x,u],y,z,v}+{\bf e}_{x,[y,u],z,v}+{\bf e}_{x,y,[z,u],v}$$
$$+[[{\bf J}_{xuv},1_z],1_y]+[{\bf J}_{xuv},1_{[y,z]}]+[1_x,[{\bf J}_{yuv},1_z]]+[[1_x,{\bf J}_{zuv}],1_y]+[1_x,[1_y,{\bf J}_{zuv}]]+[1_{[x,z]},{\bf J}_{yuv}].$$ The definition (\ref{Epsilon}) of $\ze$ immediately provides its
$V_1$-component $\bf e$ as a sum of 5 terms of the type ``$\ell_3\ell_2$ or
$\ell_2\ell_3$''. The preceding $V_1$-component ${\bf t}_4$ of
$\za_4^{-1}$ can thus be explicitly written as a sum of $29$ terms
of the type ``$\ell_3\ell_2\ell_2,$ $\ell_2\ell_3\ell_2$, or
$\ell_2\ell_2\ell_3$''. It can moreover be checked that the target
$t^2\za_4^{-1}$ is again a sum of 24 terms -- the same as for $t^2\za_1$.\medskip

The $V_0$-component of $\za_3$ is $$A_3=F([x,y],z,u,v),$$ the $V_1$-component
${\bf s}_3$ can be computed as before and is a sum of 25 terms of
the usual type ``$\ell_3\ell_2\ell_2,$ $\ell_2\ell_3\ell_2$, or
$\ell_2\ell_2\ell_3$'', whereas the $V_2$-component is equal to
$${\bf a}_3=-\ell_4(\ell_2(x,y),z,u,v)-\ell_2(\ell_4(x,y,u,v),z).$$
Again $t^2\za_3$ is made up by the same 24 terms as
$t^2\za_1$ and $t^2\za_4^{-1}$.\medskip

Eventually, the $V_0$-component of $\za_2^{-1}$ is $$A_2=F([x,y],z,u,v),$$ the
$V_1$-component ${\bf t}_2$ is straightforwardly obtained as a sum
of 27 terms of the form ``$\ell_3\ell_2\ell_2,$
$\ell_2\ell_3\ell_2$, or $\ell_2\ell_2\ell_3$'', and the
$V_2$-component reads
$$-{\bf a}_2=\ell_4(\ell_2(x,z),y,u,v)+\ell_4(x,\ell_2(y,z),u,v)+\ell_2(x,\ell_4(y,z,u,v))+\ell_2(\ell_4(x,z,u,v),y).$$
The target $t^2\za_2^{-1}$ is the same as in the
preceding cases.\medskip

Coherence condition (\ref{CohLaw}) and its component expression (\ref{CohLawComp}) can now be understood. The condition on the $V_0$-components is obviously trivial. The condition on the $V_2$-components is nothing but
$L_{\infty}$-condition $n=5$, see Equation (\ref{51}). The verification of triviality of the condition on the $V_1$-components is lengthy: 6 pairs (resp. 3 pairs) of terms of the {\small LHS} ${\bf s}_1+{\bf t}_4$ (resp. {\small RHS} ${\bf s}_3+{\bf t}_2$) are opposite and cancel out, 25 terms of
the {\small LHS} coincide with terms of the {\small RHS}, and, finally, 7 triplets of {\small LHS}-terms combine with triplets of {\small RHS}-terms and provide 7 sums of 6 terms, e.g.
$$\ell_3(\ell_2(\ell_2(x,y),z),u,v)+\ell_2(\ell_3(x,y,z),\ell_2(u,v))+\ell_2(\ell_2(\ell_3(x,y,z),v),u)$$ $$-\ell_2(\ell_2(\ell_3(x,y,z),u),v)-\ell_3(\ell_2(\ell_2(x,z),y),u,v)-\ell_3({\ell_2(x,\ell_2(y,z)),u,v}).$$
Since, for ${\bf f}=\ell_3(x,y,z)\in V_1$, we have $$\ell_1({\bf f})=t{\bf J}_{xyz}=\ell_2(\ell_2(x,z),y)+\ell_2(x,\ell_2(y,z))-\ell_2(\ell_2(x,y),z),$$ the preceding sum vanishes in view of Equation
(\ref{32}). Indeed, if we associate a Lie 3-algebra to a 3-term Lie infinity algebra, we started from a homotopy algebra whose term $\ell_3$ vanishes in total degree 1, and if we build an sh algebra from a categorified algebra,
we already constructed an $\ell_3$-map with that property. Finally, the condition on $V_1$-components is really trivial and the coherence law (\ref{CohLaw}) is actually equivalent to $L_{\infty}$-condition $n=5$.\end{proof}

\section{Monoidal structure of the category {\tt Vect} $n$-{\tt
Cat}}

In this section we exhibit a specific aspect of the natural
monoidal structure of the category of linear $n$-categories.

\begin{prop}\label{LinnFun} If $L$ and $L'$ are linear $n$-categories, a family $F_m:L_m\to L_m'$ of linear maps that respects sources, targets, and identities, commutes
automatically with compositions and thus defines a linear
$n$-functor $F:L\to L'$.\end{prop}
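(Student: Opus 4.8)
The plan is to use the explicit normal form for a linear $n$-categorical structure established in Proposition \ref{UniqueCatStr}. First I would recall that, identifying each $L_m$ with $\oplus_{i=0}^m V_i$, $V_i=\ker s_i$ (and likewise $L'_m$ with $\oplus_{i=0}^m V'_i$), the source, target, and identity maps are given by Equations (\ref{source})--(\ref{identity}), while the composition along a $p$-cell is forced to be (\ref{composition}). Since $F$ respects $s$, $t$, and $1$ by hypothesis, the point is simply to observe that a family of linear maps respecting these three operations automatically respects the unique composition determined by them.

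Concretely, the key step is the following. Let $(v,w)\in L_m\times_{L_p}L_m$ with $v=(v_0,\ldots,v_m)$, $w=(w_0,\ldots,w_m)$, so that the composability condition forces $(w_0,\ldots,w_p)=(v_0,\ldots,v_{p-1},v_p+tv_{p+1})$. One checks that $v$ and $w$ can be written as $v=v''\circ_p 1^{m-p}_{t^{m-p}v}$ and $w = 1^{m-p}_{s^{m-p}w}\circ_p w''$ for suitable cells, or — more directly — one decomposes $v\circ_p w$ using the interchange between $\circ_p$ and the identities: by (\ref{composition}), $v\circ_p w=(v_0,\ldots,v_p,v_{p+1}+w_{p+1},\ldots,v_m+w_m)$, which by linearity of addition in each $V_i$ equals $(v+1^{m-p}_{s^{m-p}w'})\circ_p(1^{m-p}_{t^{m-p}v}+w')$ where $w'$ has first $p+1$ coordinates zero. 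The upshot is that every composite of composable cells can be expressed purely through the vector space addition on the $V_i$'s together with the maps $s,t,1$. Applying $F$, using that $F$ is linear on each $L_m$ and commutes with $s,t,1$, and then using the same normal form (\ref{composition}) in $L'$, one gets $F(v\circ_p w)=F(v)\circ_p F(w)$.

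Alternatively, and perhaps more cleanly, I would argue via the equivalence $\frak N,\frak G$ of Proposition \ref{EquivCat}: a family $F_m$ respecting $s,t$ restricts to maps $V_m=\ker s_m\to V'_m$ that commute with the differentials $t_m|_{V_m}$, hence defines a linear chain map $\phi:V\to V'$; the $n$-functor $\frak G(\phi)$ built from $\phi$ is then, by construction (it is $\oplus_{i=0}^m\phi_i$ on $L_m$ and respects all structure maps), a linear $n$-functor whose underlying maps on sources, targets and identities coincide with those of $F$. Since on the model $\oplus_{i=0}^m V_i$ the maps $F_m$ are determined on the subspaces $1^{m-i}(V_i)\subset L_m$ by the requirement that they commute with identities, and these subspaces span $L_m$, we get $F_m=\frak G(\phi)_m$ for all $m$, so $F$ commutes with compositions as well.

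The main obstacle is purely bookkeeping: one must make sure the composability condition is used correctly so that the decomposition of a composable pair $(v,w)$ into pieces involving only identities and $\circ_p$-sums is legitimate, and one must track the indices $p$ versus $m$ carefully (the case $p=m-1$, where $s(a\circ_p b)=s(a)$ and $t(a\circ_p b)=t(b)$, versus $p\le m-2$). There is no conceptual difficulty — the content is entirely that Proposition \ref{UniqueCatStr} already encodes composition as a derived operation, so any structure-respecting map on the remaining data respects it too. I would therefore keep the write-up short, invoking Proposition \ref{UniqueCatStr} (or Proposition \ref{EquivCat}) and spelling out only the one-line verification that (\ref{composition}) is recovered after applying $F$.
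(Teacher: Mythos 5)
Your argument is correct and is essentially the paper's own proof: both reduce to the normal form of Proposition \ref{UniqueCatStr}, under which a family respecting $s$, $t$, and $1$ acts componentwise as $F_m(v_0,\ldots,v_m)=(F_0v_0,\ldots,F_mv_m)$, preserves the composability condition, and hence preserves the composition (\ref{composition}), which is just coordinatewise addition beyond index $p$. The extra decompositions via identity cells and the detour through $\frak N,\frak G$ are harmless but unnecessary; the one-line verification from (\ref{composition}) suffices.
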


\begin{proof} If $v=(v_0,\ldots, v_m), w=(w_0,\ldots,w_m)\in L_m$ are composable along a $p$-cell, then $F_mv=(F_0v_0, \ldots,$ $ F_mv_m)$ and $F_mw=(F_0w_0,
\ldots, F_mw_m)$ are composable as well, and
$F_m(v\circ_pw)=(F_mv)\circ_p(F_mw)$ in view of Equation
(\ref{composition}).\end{proof}

\begin{prop} The category {\tt Vect} $n$-{\tt Cat} admits a canonical symmetric monoidal structure $\boxtimes$.\end{prop}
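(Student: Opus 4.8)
The plan is to construct the tensor product $\boxtimes$ of linear $n$-categories directly on the chain-complex side, where the answer is obvious, and then transport it through the equivalence $\mathfrak N,\mathfrak G$ of Proposition \ref{EquivCat}. Concretely, since {\tt Vect}\,$n$-{\tt Cat} is equivalent to {\tt C}$^{n+1}(${\tt Vect}$)$, the category of bounded $(n+1)$-term chain complexes of vector spaces, it suffices to produce a symmetric monoidal structure on the latter and pull it back. On {\tt C}$^{n+1}(${\tt Vect}$)$ there is the usual graded tensor product of complexes, $(V\otimes W)_m=\bigoplus_{i+j=m}V_i\otimes W_j$ with differential $d_{V\otimes W}=d_V\otimes\op{id}+\op{id}\otimes d_W$ (Koszul signs), together with the braiding induced by the graded flip; however the tensor product of an $(n+1)$-term complex with an $(n+1)$-term complex is a $(2n+1)$-term complex, so one must compose with the good truncation functor $\tau_{\le n}$ (kernel in degree $n$, the terms $V_i$ for $i<n$ unchanged) back into {\tt C}$^{n+1}(${\tt Vect}$)$. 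First I would check that $\tau_{\le n}$ is lax symmetric monoidal, or rather that $\tau_{\le n}(-\otimes-)$ inherits the associativity and symmetry constraints of $\otimes$; this is the standard fact that good truncation is compatible with tensor products of complexes concentrated in non-negative degrees, and the coherence pentagon and hexagon are then inherited verbatim. The unit object is the complex $\mathbb K$ concentrated in degree $0$, which corresponds under $\mathfrak G$ to the terminal-looking linear $n$-category with $\mathbb K$ in every slot and all maps identities.

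Having fixed the monoidal structure on complexes, I would then \emph{define} $L\boxtimes L'$ for linear $n$-categories by $L\boxtimes L':=\mathfrak G\bigl(\tau_{\le n}(\mathfrak N(L)\otimes\mathfrak N(L'))\bigr)$, extend $\boxtimes$ to linear $n$-functors in the same way via $\mathfrak N$ on morphisms, and transport the associator, unitors, and braiding across the equivalence. Because $\mathfrak N$ and $\mathfrak G$ are inverse equivalences (indeed $\mathfrak{GN}=\op{id}$ by the proof of Proposition \ref{EquivCat}), all the coherence axioms for $(\,${\tt Vect}\,$n$-{\tt Cat}$,\boxtimes)$ follow formally from those already verified on {\tt C}$^{n+1}(${\tt Vect}$)$: a symmetric monoidal structure transports along an equivalence of categories. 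I would state this transport principle as the organizing lemma and then simply record the resulting explicit formulas for $\boxtimes$ on objects — using Proposition \ref{UniqueCatStr}, the composite cells, sources, targets and identities of $L\boxtimes L'$ are completely pinned down by the underlying globular data, so no separate verification of the $n$-categorical axioms for $L\boxtimes L'$ is needed beyond Proposition \ref{UniqueCatStr} itself.

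The main obstacle I anticipate is purely bookkeeping rather than conceptual: writing down $\tau_{\le n}(\mathfrak N(L)\otimes\mathfrak N(L'))$ explicitly in terms of the cells of $L$ and $L'$, and checking that the truncation in top degree $n$ does not spoil the symmetry constraint — one must verify that the flip map $V\otimes W\to W\otimes V$ descends to the truncations, i.e. sends the subspace killed by $\tau_{\le n}$ on one side into the corresponding subspace on the other, which is immediate since the flip is degreewise and truncation only depends on total degree. A secondary subtlety is the choice of sign convention in the differential of $\mathfrak N(L)\otimes\mathfrak N(L')$ and its interaction with the target map under $\mathfrak G$ (recall that in $\mathfrak G(V)$ the role of $t$ on the top-degree term is played by $d$, per the proof of Proposition \ref{EquivCat}); I would fix the Koszul sign rule once and note that any consistent choice yields the same monoidal category up to the canonical isomorphism. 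Finally, for the word ``canonical'' in the statement I would point out that the monoidal structure is the unique one making $\mathfrak N$ a strong symmetric monoidal functor to $(${\tt C}$^{n+1}(${\tt Vect}$),\tau_{\le n}\circ\otimes)$, which also makes the symmetry evident and ties back to the discussion of the Eilenberg--Zilber map promised in the introduction.
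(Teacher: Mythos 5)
Your construction does produce \emph{a} symmetric monoidal structure on {\tt Vect}\,$n$-{\tt Cat} (transporting a monoidal structure across the equivalence $\mathfrak N,\mathfrak G$ is legitimate, and the brutal truncation of $\otimes$ on non-negatively graded complexes is indeed associative and symmetric), but it is not the structure $\boxtimes$ that this proposition introduces, and the difference is not cosmetic. The paper defines $L\boxtimes L'$ \emph{levelwise}: $(L\boxtimes L')_m=L_m\otimes L'_m$, $S_m=s_m\otimes s'_m$, $T_m=t_m\otimes t'_m$, $I_m=1_m\otimes 1'_m$, with the compositions then forced by Proposition \ref{UniqueCatStr}; functoriality on morphisms is Proposition \ref{LinnFun}, and the unit is the constant category $K_m=\K$. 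Under the identification $L_m\cong\oplus_{i\le m}V_i$ this gives $(L\boxtimes L')_m\cong\bigl(\oplus_{i\le m}V_i\bigr)\otimes\bigl(\oplus_{j\le m}V'_j\bigr)=\oplus_{i,j\le m}V_i\otimes V'_j$, whereas your $\mathfrak G\bigl(\tau_{\le n}(\mathfrak N(L)\otimes\mathfrak N(L'))\bigr)$ has $m$-cells $\oplus_{i+j\le m}V_i\otimes V'_j$. Already for $m=1$ these disagree (yours omits $V_1\otimes V'_1$), so you have built a genuinely different monoidal category.

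More seriously, your closing characterization of ``canonical'' --- the unique structure making $\mathfrak N$ strong symmetric monoidal --- is exactly the property the paper goes on to show \emph{fails} for $\boxtimes$: Sections 5 and 6 are devoted to the observation that $\mathfrak N(L)\otimes\mathfrak N(L)\ne\mathfrak N(L\boxtimes L)$, that $\boxtimes:L\times L'\to L\boxtimes L'$ is not a bilinear $n$-functor, and that the Eilenberg--Zilber map does not repair this because the comparison map $\ell$ is not simplicial. The word ``canonical'' here refers to the naturality of the levelwise tensor product of the underlying globular vector spaces, not to compatibility with the tensor product of complexes. Your object would also break Proposition \ref{PreUP}, whose statement uses the bilinear maps $\boxtimes_m:L_m\times L'_m\to L_m\otimes L'_m$ and the universal property of the tensor product of vector spaces degree by degree. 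So the missing idea is simply the paper's definition itself: take the degreewise tensor product of globular vector spaces and let Proposition \ref{UniqueCatStr} supply the compositions; transporting the chain-complex tensor product lands on the wrong structure.
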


\begin{proof} We first define the product $\boxtimes$ of two linear $n$-categories $L$ and $L'$. The
$n$-globular vector space that underlies the linear $n$-category
$L\boxtimes L'$ is defined in the obvious way, $(L\boxtimes
L')_m=L_m\otimes L_m'$, $S_m=s_m\otimes s'_m$, $T_m=t_m\otimes
t'_m.$ Identities are clear as well, $I_m=1_m\otimes 1'_m.$ These
data can be completed by the unique possible compositions
$\square_p$ that then provide a linear $n$-categorical structure.

If $F:L\to M$ and $F':L'\to M'$ are two linear $n$-functors, we
set $$(F\boxtimes F')_m=F_m\otimes F'_m\in\op{Hom}_{\K}(L_m\otimes
L'_m,M_m\otimes M'_m),$$ where $\K$ denotes the ground field. Due
to Proposition {\ref{LinnFun}}, the family $(F\boxtimes F')_m$
defines a linear $n$-functor $F\boxtimes F':L\boxtimes L'\to
M\boxtimes M'$.

It is immediately checked that $\boxtimes$ respects composition
and is therefore a functor from the product category ({\tt Vect}
$n$-{\tt Cat})$^{\times 2}$ to {\tt Vect} $n$-{\tt Cat}. Further,
the linear $n$-category $K$, defined by $K_m=\K$,
$s_m=t_m=\op{id}_{\K}$ ($m>0$), and $1_m=\op{id}_{\K}$ ($m<n$),
acts as identity object for $\boxtimes$. Its is now clear that
$\boxtimes$ endows {\tt Vect} $n$-{\tt Cat} with a symmetric
monoidal structure.\end{proof}

\begin{prop}\label{PreUP} Let $L$, $L'$, and $L''$ be linear $n$-categories. For any bilinear
$n$-functor $F:L\times L'\to L''$, there exists a unique linear
$n$-functor $\tilde F:L\boxtimes L'\to L''$, such that
$\boxtimes\, \tilde F=F.$ Here $\boxtimes:L\times L'\to L\boxtimes
L'$ denotes the family of bilinear maps $\boxtimes_m:L_m\times
L_m'\ni (v,v')\mapsto v\otimes v'\in L_m\otimes L_m'$, and
juxtaposition denotes the obvious composition of the first with
the second factor.\end{prop}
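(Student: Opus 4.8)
The plan is to exploit the universal property of the tensor product of vector spaces degree by degree, and then invoke Proposition~\ref{LinnFun} to upgrade the resulting family of linear maps to a linear $n$-functor. First I would define $\tilde F$ on objects: for each $m\in\{0,\ldots,n\}$, the map $F_m:L_m\times L_m'\to L_m''$ is bilinear, so by the universal property of $\otimes$ there is a unique linear map $\tilde F_m:L_m\otimes L_m'\to L_m''$ with $\tilde F_m(v\otimes v')=F_m(v,v')$, i.e. $\tilde F_m\circ\boxtimes_m=F_m$. This already forces uniqueness of $\tilde F$, since any linear $n$-functor $\tilde F$ with $\boxtimes\,\tilde F=F$ must have its $m$-th component equal to this $\tilde F_m$ on all decomposable tensors $v\otimes v'$, and these span $L_m\otimes L_m'$.

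Next I would check that the family $(\tilde F_m)_m$ respects sources, targets, and identities. Since $S_m=s_m\otimes s_m'$, $T_m=t_m\otimes t_m'$, $I_m=1_m\otimes 1_m'$ on $L\boxtimes L'$, and since $F$ is an $n$-functor hence commutes with $s,t,1$ on each factor, one gets on decomposable tensors, e.g.,
$$\tilde F_{m-1}(S_m(v\otimes v'))=\tilde F_{m-1}(s_mv\otimes s_m'v')=F_{m-1}(s_mv,s_m'v')=s_m''F_m(v,v')=s_m''\tilde F_m(v\otimes v'),$$
and similarly for $t$ and for identities $1_m$; by linearity these identities extend from decomposable tensors to all of $L_m\otimes L_m'$. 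Hence $(\tilde F_m)_m$ is a family of linear maps compatible with the globular structure and identities.

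At this point Proposition~\ref{LinnFun} does the remaining work for free: a family of linear maps between linear $n$-categories that respects sources, targets, and identities automatically commutes with all compositions, so $\tilde F:L\boxtimes L'\to L''$ is a genuine linear $n$-functor. Finally $\boxtimes\,\tilde F=F$ holds by construction on decomposable tensors and hence everywhere, which is exactly the required factorization; uniqueness was already noted.

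I do not expect a serious obstacle here: the only mild subtlety is making sure the compatibility checks, which are immediate on decomposable tensors, genuinely extend by linearity (they do, because $s,t,1,S,T,I$ and all the $\tilde F_m$ are linear and decomposables span), and that one really may appeal to Proposition~\ref{LinnFun} rather than verifying the compatibility of $\tilde F$ with every $\square_p$ by hand. The key point worth stating explicitly is that the compositions $\square_p$ on $L\boxtimes L'$ were defined as the \emph{unique} ones completing the $n$-globular vector space with identities, so that Proposition~\ref{LinnFun} applies to $L\boxtimes L'$ just as to any linear $n$-category.
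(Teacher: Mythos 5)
Your proof is correct and is essentially the argument the paper has in mind: the paper's own proof is the one-line remark that the result is a straightforward consequence of the universal property of the tensor product of vector spaces, and your write-up simply fills in the details, including the (appropriate) appeal to Proposition~\ref{LinnFun} to dispose of the compositions.
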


\begin{proof} The result is a straightforward consequence of the universal property of the tensor product of vector spaces.\end{proof}

The next remark is essential.

\begin{rem} Proposition \ref{PreUP} is not a Universal Property for the tensor product $\boxtimes$ of {\tt Vect} $n$-{\tt Cat},
since $\boxtimes:L\times L'\to L\boxtimes L'$ is not a bilinear
$n$-functor. It follows that bilinear $n$-functors on a product
category $L\times L'$ cannot be identified with linear
$n$-functors on the corresponding tensor product category
$L\boxtimes L'$.\end{rem}

The point is that the family $\boxtimes_m$ of bilinear maps
respects sources, targets, and identities, but not compositions
(in contrast with a similar family of linear maps, see Proposition
\ref{LinnFun}). Indeed, if $(v,v'),(w,w')\in L_m\times L'_m$ are
two $p$-composable pairs (note that this condition is equivalent
with the requirement that $v,w\in L_m$ and $v',w'\in L_m'$ be
$p$-composable), we have
\be\label{CartProd}\boxtimes_m((v,v')\circ_p(w,w'))=(v\circ_pw)\otimes(v'\circ_pw')\in
L_m\otimes L_m',\ee and
\be\label{TensProd}\boxtimes_m(v,v')\;\circ_p\;\boxtimes_m(w,w')=(v\otimes
v')\circ_p(w\otimes w')\in L_m\otimes L_m'.\ee As the elements
(\ref{CartProd}) and (\ref{TensProd}) arise from the compositions
in $L_m\times L'_m$ and $L_m\otimes L'_m$, respectively, -- which
are forced by linearity and thus involve the completely different
linear structures of these spaces -- it can be expected that the
two elements do not coincide.\medskip

Indeed, when confining ourselves, to simplify, to the case $n=1$
of linear categories, we easily check that
\be\label{NonFunUP}(v\circ w)\otimes (v'\circ w')=(v\otimes v')\circ(w\otimes
w')+(v-1_{tv})\otimes {\bf w'}+{\bf w}\otimes (v'-1_{{tv'}}).\ee

Observe also that the source spaces of the linear maps
$$\circ_L\otimes\circ'_{L'}:(L_1\times_{L_0}L_1)\otimes
(L'_1\times_{L'_0}L'_1)\ni (v,w)\otimes(v',w')\mapsto (v\circ
w)\otimes (v'\circ w')\in L_1\otimes L_1'$$ and
$$\circ_{L\boxtimes L'}:(L_1\otimes L_1')\times_{L_0\otimes L_0'}(L_1\otimes L_1')\ni ((v\otimes v'),(w\otimes w'))\mapsto (v\otimes v')\circ(w\otimes w')\in
L_1\otimes L_1'$$ are connected by
\be \ell_2: (L_1\times_{L_0}L_1)\otimes (L'_1\times_{L'_0}L'_1)\ni
(v,w)\otimes(v',w')\mapsto (v\otimes v',w\otimes w')\in
(L_1\otimes L_1')\times_{L_0\otimes L_0'}(L_1\otimes L_1')\label{NonFun}\ee -- a
linear map with nontrivial kernel.

\section{Discussion}

We continue working in the case $n=1$ and
investigate a more conceptual approach to the construction of a
chain map $\ell_2:{\frak N}(L)\otimes {\frak N}(L)\to {\frak
N}(L)$ from a bilinear functor $[-,-]:L\times L\to L$.\medskip

When denoting by $[-,-]:L\boxtimes L\to L$ the induced linear functor, we get a chain map ${\frak
N}([-,-]):{\frak N}(L\boxtimes L)\to {\frak N}(L)$, so that it is
natural to look for a second chain map $$\zf:{\frak N}(L)\otimes
{\frak N}(L)\to {\frak N}(L\boxtimes L).$$

The informed reader may skip the following subsection.

\subsection{Nerve and normalization functors, Eilenberg-Zilber chain map}

The objects of the simplicial category $\zD$ are the finite
ordinals $n=\{0,\ldots, n-1\}$, $n\ge 0$. Its morphisms $f:m\to n$ are the
order respecting functions between the sets $m$ and $n$. Let
$\zd_i:n\rightarrowtail n+1$ be the injection that omits image
$i$, $i\in\{0,\ldots,n\}$, and let $\zs_i:n+1\twoheadrightarrow n$
be the surjection that assigns the same image to $i$ and $i+1$,
$i\in\{0,\ldots,n-1\}$. Any order respecting function $f:m\to n$
reads uniquely as
$f=\zs_{j_1}\ldots\zs_{j_h}\zd_{i_1}\ldots\zd_{i_k}$, where the
$j_r$ are decreasing and the $i_s$ increasing. The application of this epi-monic decomposition to binary composites $\zd_i\zd_j$, $\zs_i\zs_j$, and $\zd_i\zs_j$ yields
three basic commutation relations.\medskip

A simplicial object in the category {\tt Vect} is a functor
$S\in[\zD^{+\op{op}},\mbox{\tt Vect}]$, where $\zD^{+}$ denotes
the full subcategory of $\zD$ made up by the nonzero finite
ordinals. We write this functor $n+1\mapsto S(n+1)=:{S_n}$, $n\ge 0$,
($S_n$ is the vector space of $n$-simplices), $\zd_i\mapsto
S(\zd_i)=:{d_i:S_n\to S_{n-1}}$, $i\in\{0,\ldots,n\}$ ($d_i$ is a
face operator), $\zs_i\mapsto S(\zs_i)=:{s_i:S_{n}\to S_{n+1}}$,
$i\in\{0,\ldots,n\}$ ($s_i$ is a degeneracy operator). The $d_i$
and $s_j$ verify the duals of the mentioned commutation rules.
The simplicial data ($S_n,d^n_i,s^n_i$) (we added
superscript $n$) of course completely determine the functor $S$.
Simplicial objects in {\tt Vect} form themselves a category,
namely the functor category $s(\mbox{\tt
Vect}):=[\zD^{+\op{op}},\mbox{\tt Vect}]$, for which the
morphisms, called simplicial morphisms, are the natural
transformations between such functors. In view of the epi-monic
factorization, a simplicial map $\za:S\to T$ is exactly a
family of linear maps $\za_n:S_n\to T_n$ that commute with the
face and degeneracy operators. \medskip

The nerve functor $${\cal N}:\mbox{\tt VectCat}\to s(\mbox{\tt
Vect})$$ is defined on a linear category $L$ as the sequence
$L_0,L_1,L_2:=L_1\times_{L_0}L_1,
L_3:=L_1\times_{L_0}L_1\times_{L_0}L_1\ldots$ of vector spaces of
$0,1,2,3\ldots$ simplices, together with the face operators
``composition'' and the degeneracy operators ``insertion of
identity'', which verify the simplicial commutation rules.
Moreover, any linear functor $F:L\to L'$ defines linear maps
$F_n:L_n\ni (v_1,\ldots,v_n)\to (F(v_1),\ldots, F(v_n))\in L'_n$
that implement a simplicial map.\medskip

The normalized or Moore chain complex of a simplicial vector space
$S=(S_n, d_i^n, s_i^n)$ is given by
$N(S)_n=\cap_{i=1}^n\op{ker}d_i^n\subset S_n$ and
$\partial_n=d_0^n.$ Normalization actually provides a functor
$$ N:s(\mbox{\tt Vect}) \leftrightarrow \mbox{\tt C}^+(\mbox{\tt
Vect}):\zG$$ valued in the category of nonnegatively graded chain
complexes of vector spaces. Indeed, if $\za:S\to T$ is a
simplicial map, then $\za_{n-1}d_{i}^n=d_{i}^n\za_{n}$. Thus,
$N(\za):N(S)\to N(T$), defined on $c_n\in N(S)_n$ by
$N(\za)_n(c_n)=\za_n(c_n)$, is valued in $N(T)_n$ and is further a
chain map. Moreover, the Dold-Kan correspondence claims that the
normalization functor $N$ admits a right adjoint $\zG$ and that
these functors combine into an equivalence of categories.\medskip

It is straightforwardly seen that, for any linear category $L$, we have \be\label{CompClassFun} N({\cal N}(L))={\frak N}(L).\ee

The categories $s(\mbox{\tt Vect})$ and {\tt C}$^+$({\tt Vect}) have well-known monoidal structures (we denote the unit objects by $I_s$ and $I_{\tt C}$, respectively). The normalization functor $N:s(\mbox{{\tt Vect}})\rightarrow
\mbox{\tt C}^+(\mbox{{\tt Vect}})$ is lax monoidal, i.e. it respects the tensor products and unit objects up to coherent chain maps $\ze:I_{{\tt C}}\to N(I_s)$ and
$$EZ_{S,T}:N(S)\otimes N(T)\to N(S\otimes T)$$ (functorial in $S,T\in s(\mbox{\tt Vect})$), where $EZ_{S,T}$ is the
Eilenberg-Zilber map. Functor $N$ is lax comonoidal or oplax monoidal as well, the chain morphism
being here the Alexander-Whitney map $AW_{S,T}.$ These chain maps are inverses of
each other up to chain homotopy, $EZ\;AW=1,\;AW\,EZ\sim 1.$\medskip

The Eilenberg-Zilber map is defined as follows. Let $a\otimes b\in N(S)_p\otimes
N(T)_q\subset S_p\otimes T_q$ be an element of degree $p+q$. The chain map $EZ_{S,T}$ sends $a\otimes b$ to an
element of $N(S\otimes T)_{p+q}\subset (S\otimes
T)_{p+q}=S_{p+q}\otimes T_{p+q}$. We have $$EZ_{S,T}(a\otimes
b)=\sum_{(p,q)-\mbox{shuffles }(\zm,\zn)}\op{sign}(\zm,\zn)\;\;s_{\zn_q}(\ldots
(s_{\zn_1}a))\,\otimes\, s_{\zm_p}(\ldots (s_{\zm_1}b))\in
S_{p+q}\otimes T_{p+q},$$ where the shuffles are permutations of
$(0,\ldots, p+q-1)$ and where the $s_i$ are the degeneracy operators.

\subsection{Monoidal structure and obstruction}

We now come back to the construction of a chain map $\zf:{\frak
N}(L)\otimes {\frak N}(L)\to {\frak N}(L\boxtimes L)$.\medskip

For $L'=L$, the linear map (\ref{NonFun}) reads
$$\ell_2: ({\cal N}(L)\otimes {\cal N}(L))_2\ni
(v,w)\otimes(v',w')\mapsto (v\otimes v',w\otimes w')\in
{\cal N}(L\boxtimes L)_2.$$ If its obvious extensions $\ell_n$ to all other spaces $({\cal N}(L)\otimes {\cal N}(L))_n$ define a simplicial map $\ell:{\cal N}(L)\otimes {\cal N}(L)\to {\cal N}(L\boxtimes L),$ then $$N(\ell):N({\cal N}(L)\otimes {\cal N}(L))\to N({\cal N}(L\boxtimes L))$$ is a chain map. Its composition with the Eilenberg-Zilber chain map $$EZ_{{\cal N}(L),{\cal N}(L)}:N({\cal N}(L))\otimes N({\cal N}(L))\to N({\cal N}(L)\otimes {\cal N}(L))$$ finally provides the searched chain map $\zf$, see Equation (\ref{CompClassFun}).

However, the $\ell_n$ do not commute with all degeneracy and face operators. Indeed, we have for instance $$\ell_2((d_2^3\otimes d_2^3)((u,v,w)\otimes(u',v',w')))=(u\otimes u',(v\circ w)\otimes (v'\circ w')),$$ whereas $$d_2^3(\ell_3((u,v,w)\otimes(u',v',w')))=(u\otimes u',(v\otimes v')\circ(w\otimes w')).$$ Equation (\ref{NonFunUP}), which means that $\boxtimes:L\times L'\to L\boxtimes L'$ is not a functor, shows that these results do not coincide.\medskip

A natural idea would be to change the involved monoidal structures
$\boxtimes$ of {\tt VectCat} or $\otimes$ of {\tt C}$^{+}$({\tt
Vect}). However, even if we substitute the Loday-Pirashvili tensor
product $\otimes_{\op{LP}}$ of 2-term chain complexes of vector
spaces, i.e. of linear maps \cite{LP98}, for the usual tensor
product $\otimes$, we do not get ${\frak N}(L)\otimes_{\op{LP}}
{\frak N}(L)={\frak N}(L\boxtimes L).$\bigskip\bigskip

\noindent{\bf Acknowledgements.} The authors thank the referee for
having pointed out to them additional relevant literature.

\def\cprime{$'$} \def\cprime{$'$} \def\cprime{$'$} \def\cprime{$'$}

\end{document}